\documentclass[preprint]{imsart}

\RequirePackage[OT1]{fontenc}
\RequirePackage{amsthm,amsmath,amssymb}
\RequirePackage{natbib}
\RequirePackage[colorlinks,citecolor=blue,urlcolor=blue]{hyperref}
\usepackage{mathtools, enumitem}
\mathtoolsset{showonlyrefs} 
\usepackage[margin=27mm]{geometry}
\newcommand{\rd}{\mathrm{d}}
\newcommand{\T}{{ \mathrm{\scriptscriptstyle T} }}

\startlocaldefs
\numberwithin{equation}{section}
\theoremstyle{plain}
\newtheorem{thm}{Theorem}[section]
\newtheorem{lem}{Lemma}[section]
\newtheorem{corollary}{Corollary}[section]
\theoremstyle{remark}
\newtheorem{remark}{Remark}[section]
\theoremstyle{definition}

\endlocaldefs

\begin{document}

\begin{frontmatter}
\title{A Review of Brown 1971 (in)admissibility results under scale mixtures of Gaussian priors}
\runtitle{Admissibility}

\begin{aug}
\author{\fnms{Yuzo} \snm{Maruyama}
\ead[label=e1]{maruyama@port.kobe-u.ac.jp}}
\and
\author{\fnms{William, E.} \snm{Strawderman}
\ead[label=e2]{straw@stat.rutgers.edu}}

\address{Kobe University and Rutgers University \\
\printead{e1,e2}}


\runauthor{Y.~Maruyama and W.~E.~Strawderman}

\affiliation{Kobe University and Rutgers University}

\end{aug}

\begin{abstract}
Brown's 1971 paper 
``Admissible estimators, recurrent diffusions and insoluble boundary value problems'' 
is a landmark in the admissibility literature. 
It nearly completely settles the issue of admissibility/inadmissibility for estimating the mean of a multivariate normal distribution with identity covariance under sum of squared error loss. 
We revisit this wonderful tour de force on its 50th anniversary and present an alternative and more direct proof of the result for generalized Bayes estimators corresponding to priors which are a subclass of scale mixtures of spherical normals.
\end{abstract}

\begin{keyword}[class=MSC]
\kwd[Primary ]{62C15}  
\kwd[; secondary ]{62J07}
\end{keyword}

\begin{keyword}
\kwd{admissibility}
\kwd{Bayes}
\end{keyword}
\end{frontmatter}

\section{Introduction}
\label{sec:intro}
Brown's 1971 paper 
``Admissible estimators, recurrent diffusions and insoluble boundary value problems'' is a landmark in the admissibility literature. 
It nearly completely settles the issue of admissibility for estimating the mean of a multivariate normal distribution with identity covariance under sum of squared error loss. 
We revisit this wonderful tour de force on its 50th anniversary and present an alternative and more direct proof of the result for generalized Bayes estimators corresponding to priors which are a subclass of scale mixtures of spherical normals.

Specifically, let
\begin{align*}
 X\sim N_d(\mu,I)
\end{align*}
with  density
\begin{align*}
 \phi(x-\mu)=\frac{1}{(2\pi)^{d/2}}\exp\left(-\frac{\|x-\mu\|^2}{2}\right).
\end{align*}
Consider estimation of $\mu$
under quadratic loss 
$\|\delta-\mu\|^2$.
Let $\Pi(\rd \mu)$ and $m_{\Pi}$ be the prior measure and the corresponding marginal density given by
\begin{align}\label{m_F}
 m_\Pi(x)=\int\phi(x-\mu)\Pi(\rd\mu).
\end{align} 
Then the (generalized) Bayes estimator under $\Pi(\rd \mu)$ is given by 
\begin{align}
 \delta_{\Pi}&=\frac{\int \mu \phi(x-\mu)\Pi(\rd\mu)}{\int \phi(x-\mu)\Pi(\rd\mu)} \\
&=x+\frac{\int (\mu-x) \phi(x-\mu)\Pi(\rd\mu)}{\int \phi(x-\mu)\Pi(\rd\mu)} \\
&=x+ \nabla_x\log m_{\Pi}(x). \label{bayes.esti.F} 
\end{align}
We are interested in determining admissibility/inadmissibility of generalized Bayes estimators, 
for which \cite{Brown-1971} has given an essentially complete solution. 
Among the many results of \cite{Brown-1971}, the following result, Theorem 3.6.1 seems to be the most often quoted.
\begin{quote}
\normalsize\textbf{Theorem} Suppose $\Pi$ is spherically symmetric. Hence $m_{\Pi}(x)=m_{\Pi}( \|x\|^2 )$. If
\begin{align}\label{Brown1971.infty.integral}
 \int_1^\infty \frac{\rd t}{t^{d/2}m_{\Pi}(t)}<\infty,
\end{align}
then $\delta_{\Pi}$ is inadmissible. If the integral is infinite and the risk of $\delta_{\Pi}$ is
bounded or equivalently
\begin{align}\label{Brown1971.infty}
\sup_{\mu}E[\|\nabla_x\log m_{\Pi}(\|X\|^2)\|^2]<\infty,
\end{align}
then $\delta_{\Pi}$ is admissible.
\end{quote}
In \cite{Brown-1971}, however, this statistically important result 
is derived as a corollary of quite deep mathematical results, which many find difficult to follow.
\cite{Brown-Hwang-1982} provide a sufficient condition for admissibility of generalized Bayes estimators, which is
much more readable than \cite{Brown-1971} and is 
based directly on the prior $\pi(\mu)$. 
However, the sufficient condition for admissibility is
a proper subset of Brown's. For example,
some important priors including \cite{Stein-1974}'s prior 
\begin{align}\label{stein.prior}
 \pi_S(\mu)=\|\mu\|^{2-d}
\end{align}
which do satisfy Brown's conditions do not satisfy Brown-Hwang's.
\cite{Maruyama-Takemura-2008} and \cite{Maruyama-2009b} somewhat enlarged the class of admissible generalized Bayes estimators when the prior density of $\mu$ is spherically symmetric. 
However, these three papers \cite{Brown-Hwang-1982}, \cite{Maruyama-Takemura-2008} and \cite{Maruyama-2009b} 
do not seriously consider the boundary between admissibility and inadmissibility, which \cite{Brown-1971} does.

In this review paper, we assume the prior density is given by
\begin{align}\label{prior.pimu}
\pi(\mu)= \int_0^\infty g^{-d/2}\exp\left(-\frac{\|\mu\|^2}{2g}\right)\Pi(\rd g),
\end{align}
with non-negative measure $\Pi$ on $g$, and are going to determine admissibility/inadmissibility
of generalized Bayes estimators for a certain subclass of mixture priors in terms of $\Pi$  with a completely self-contained proof.
If $\Pi$ is finite or proper, the corresponding Bayes estimator is admissible.
Thus we are mainly interested in the case of infinite $\Pi$. However,
we do not exclude the case of a finite measure $\Pi$.

By the identity
\begin{align*}
 \|x-\mu\|^2+\frac{\|\mu\|^2}{g}=\frac{g+1}{g}\left\|\mu-\frac{g}{g+1}x\right\|^2+\frac{\|x\|^2}{g+1},
\end{align*}
the marginal likelihood $m_\pi$ is 
\begin{equation}\label{mpi_marginal}
m_\pi(\|x\|^2)= \int_{\mathbb{R}^d}\phi(x-\mu)\pi(\mu)\rd \mu 
=\int_0^\infty(g+1)^{-d/2}\exp\left(-\frac{\|x\|^2}{2(g+1)}\right)\Pi(\rd g),
\end{equation}
which is finite for all $x$ if
\begin{align}\label{minimum}
 \int_0^\infty\frac{\Pi(\rd g)}{(g+1)^{d/2}}<\infty.
\end{align}
Throughout the paper, we assume the prior $\Pi$ satisfies \eqref{minimum}.
By \eqref{bayes.esti.F}, together with $m_\pi$ given in \eqref{mpi_marginal},
the generalized Bayes estimator under $\pi(\mu)$ given by \eqref{prior.pimu} is written as
\begin{align}\label{estimator_pi}
 \delta_\pi=x+\nabla_x\log m_\pi(\|x\|^2)
=\left(1-\frac{\int_0^\infty(g+1)^{-d/2-1}\exp\left(-\frac{\|x\|^2}{2(g+1)}\right)\Pi(\rd g)}
{\int_0^\infty(g+1)^{-d/2}\exp\left(-\frac{\|x\|^2}{2(g+1)}\right)\Pi(\rd g)}\right)x.
\end{align}

In earlier studies, the prior \eqref{prior.pimu} with some regular varying mixing density
has been used in order to establish minimaxity or both minimaxity and admissibility 
of the estimator \eqref{estimator_pi}. See Remark \ref{rem:piabc}.
The class of mixing priors we consider is the following generalization of such mixing densities. 

Suppose $\Pi(\rd g)$ in \eqref{prior.pimu} has a regularly varying density 
of the form
\begin{align}
 \pi(g;a,b,c)=(g+1)^a\left(\frac{g}{g+1}\right)^b\left\{\log(g+1)+1\right\}^c,\label{pigabc}\\
\text{for }a<d/2-1,\quad b>-1,\quad c\in\mathbb{R},\label{a.b.c}
\end{align}
where \eqref{a.b.c} is necessary and sufficient for \eqref{minimum}.
A Tauberian theorem (see, e.g., Theorem 13.5.4 in \citet{Feller-1971}) gives
\begin{align}\label{tauberian}
\lim_{t\to\infty} \frac{t^{d/2-1}m_{\pi}(t;a,b,c)}{\pi(t;a,b,c)}=\Gamma(d/2-1-a)2^{d/2-1-a}
\end{align}
where
\begin{align}\label{tauberian_marginal}
 m_{\pi}(\|x\|^2;a,b,c)=\int_0^\infty(g+1)^{-d/2}\exp\left(-\frac{\|x\|^2}{2(g+1)}\right)\pi(g;a,b,c)\rd g.
\end{align}
Hence the integrability (or non-integrability) of \eqref{Brown1971.infty.integral} 
of is equivalent to integrability (or non-integrability) of
\begin{align}
 \int_1^\infty \frac{\rd g}{g \pi(g;a,b,c)},
\end{align}
where, as in Lemma \ref{lem:integrability.1},
\begin{align*}
 \int_1^\infty\frac{\rd g}{g\pi(g;a,b,c)}
\begin{cases}
 < \infty & \text{either }a>0 \text{ or }\{a=0 \text{ and }c>1\}, \\
=\infty & \text{either }a<0 \text{ or }\{a=0 \text{ and }c\leq 1\}.
\end{cases}
\end{align*}
Further the risk of the corresponding (generalized) Bayes estimator is bounded since
\begin{align}
\sup_{x} \|\nabla_x\log m_\pi(\|x\|^2;a,b,c)\|^2<\infty,
\end{align}
which is shown in Lemma \ref{lem:bound.risk}.
Hence, given the result of \cite{Brown-1971}, we have a following result.
\begin{thm}\label{thm.intro}
Suppose the prior $\Pi(g)$ has a regularly varying density, $\pi(g;a,b,c)$ given by \eqref{pigabc}. 
Then admissibility/inadmissibility of the the corresponding (generalized) Bayes estimator 
with $\pi(g;a,b,c)$ is determined by 
non-integrability/integrability of
\begin{align}\label{eq:inad_integrability}
 \int_1^\infty\frac{\rd g}{g\pi(g;a,b,c)},
\end{align} 
or equivalently, of \eqref{Brown1971.infty.integral}.
\end{thm}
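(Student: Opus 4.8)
The plan is to obtain Theorem~\ref{thm.intro} as a direct corollary of Brown's Theorem~3.6.1 quoted above, the only substantive ingredients being the Tauberian estimate \eqref{tauberian} and the two lemmas announced in the excerpt (Lemma~\ref{lem:integrability.1} and Lemma~\ref{lem:bound.risk}). Concretely, I would first use \eqref{tauberian} to translate Brown's integrability criterion \eqref{Brown1971.infty.integral}, which is phrased through the marginal $m_\pi$, into the announced criterion \eqref{eq:inad_integrability}, which is phrased through the mixing density $\pi(\cdot;a,b,c)$; the parameter $b$ affects only the behaviour of $\pi(\cdot;a,b,c)$ near $g=0$ and drops out of this large-$g$ analysis.

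First I would record the key equivalence. Since \eqref{tauberian} gives $t^{d/2}m_\pi(t;a,b,c)\sim \Gamma(d/2-1-a)2^{d/2-1-a}\,t\,\pi(t;a,b,c)$ as $t\to\infty$, the ratio of the two integrands $\{t^{d/2}m_\pi(t;a,b,c)\}^{-1}$ and $\{t\,\pi(t;a,b,c)\}^{-1}$ tends to a positive constant, so $\int_1^\infty\{t^{d/2}m_\pi(t;a,b,c)\}^{-1}\rd t<\infty$ if and only if $\int_1^\infty\{g\,\pi(g;a,b,c)\}^{-1}\rd g<\infty$; Lemma~\ref{lem:integrability.1} then identifies this dichotomy with the cases $\{a>0\}\cup\{a=0,c>1\}$ versus their complement.

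Then the two halves are immediate. If \eqref{eq:inad_integrability} is finite, the equivalence delivers Brown's condition \eqref{Brown1971.infty.integral}, so Brown's Theorem~3.6.1 gives inadmissibility of $\delta_\pi$. If instead \eqref{eq:inad_integrability} is infinite, the equivalence shows \eqref{Brown1971.infty.integral} fails, and it remains only to verify the risk-boundedness hypothesis \eqref{Brown1971.infty}: writing $t=\|x\|^2$, formula \eqref{estimator_pi} shows $\nabla_x\log m_\pi(\|x\|^2;a,b,c)=-h(t)x$ with $0<h(t)<1$, so $\|\nabla_x\log m_\pi\|^2=t\,h(t)^2$, and Lemma~\ref{lem:bound.risk} supplies the required uniform bound in $x$; Brown's Theorem~3.6.1 then gives admissibility.

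I expect the genuine difficulty to live entirely in the supporting lemmas rather than in this short deduction. Lemma~\ref{lem:integrability.1} is a routine calculus computation and the Tauberian step is precisely Theorem~13.5.4 of \citet{Feller-1971}; the one delicate point is the uniform bound of Lemma~\ref{lem:bound.risk}, for which one must control $t\,h(t)^2$ both near $t=0$, where $h$ is continuous, and as $t\to\infty$, where the regular variation of $\pi(\cdot;a,b,c)$ forces $h(t)$ to decay like $t^{-1}$ so that $t\,h(t)^2\to 0$. Finally, I would flag that appealing to Brown's Theorem~3.6.1 here is only a convenience for this preview statement; reproving Brown's admissibility/inadmissibility dichotomy for the mixture class \eqref{prior.pimu} from scratch is the task taken up in the sections that follow.
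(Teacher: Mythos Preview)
Your proposal is correct and follows exactly the paper's own derivation: the paper states Theorem~\ref{thm.intro} in the introduction as a consequence of Brown's Theorem~3.6.1, with the Tauberian estimate \eqref{tauberian} converting Brown's integrability condition \eqref{Brown1971.infty.integral} into \eqref{eq:inad_integrability}, Lemma~\ref{lem:integrability.1} recording the $(a,c)$-dichotomy, and Lemma~\ref{lem:bound.risk} supplying the bounded-risk hypothesis for the admissibility direction. Your closing remark is also apt: the self-contained proof that replaces the appeal to Brown is precisely what Sections~\ref{sec:inad}--\ref{sec:admissi} and Appendix~\ref{sec:proof} undertake.
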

In this paper, we will provide a self-contained proof of this result.
Further, our more general sufficient condition for admissibility in Theorem \ref{thm:ad.case.i},
\begin{align}
 \int_0^\infty \frac{\Pi(\rd g)}{g+1}  <\infty
\end{align}
includes the case where the risk of the estimator is not bounded. See Remark \ref{rem:unbounded.risk}.

The organization of this paper is as follows. 
Section \ref{sec:inad} deals with inadmissibility and 
follows a technique used in \cite{Dasgupta-Strawderman-1997}. 
Section \ref{sec:admissi} is concerned with admissibility. 
An appendix is devoted to technical results needed in the development.

\section{Inadmissibility}
\label{sec:inad}
For the inadmissibility part of Brown's theorem, the following proof is essentially due to \cite{Dasgupta-Strawderman-1997}, which relates inadmissibility to solving Riccati differential equations.
Following \cite{Brown-1971} and \cite{Dasgupta-Strawderman-1997}, 
we do not assume \eqref{prior.pimu} but just spherical symmetry on $\Pi$.
Hence the result is presented in terms of $m_\Pi$ \eqref{m_F} and $\delta_{\Pi}$ \eqref{bayes.esti.F}, 
not  $m_\pi$ \eqref{mpi_marginal} and $\delta_\pi$ \eqref{estimator_pi}.
\begin{thm}\label{thm.inad}
 The generalized Bayes estimator $\delta_{\Pi}$ under the spherically symmetric prior $\Pi(\rd \mu)$
given by \eqref{bayes.esti.F},
is inadmissible if
\begin{align}\label{inad.suff}
\int_1^\infty\frac{\rd t}{t^{d/2}m_{\Pi}(t)} <\infty,
\end{align}
where $m_{\Pi}(\|x\|^2):= m_{\Pi}(x)$.
\end{thm}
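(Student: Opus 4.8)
The plan is to follow \cite{Dasgupta-Strawderman-1997} and exhibit a dominating estimator built as a spherically symmetric perturbation of $\delta_\Pi$, reducing the domination to the solvability of a scalar Riccati differential inequality. Since $\Pi$ is spherically symmetric, so is $m_\Pi$; write $r=\|x\|$, $\mathfrak m(r)=m_\Pi(r^2)$, and look for a competitor $\delta=\delta_\Pi+\nabla u$ with $u(x)=U(r)$ radial. Using $\|\delta-\mu\|^2=\|\delta_\Pi-\mu\|^2+2(\nabla u)^\T(\delta_\Pi-\mu)+\|\nabla u\|^2$, the formula $\delta_\Pi=x+\nabla\log m_\Pi$ of \eqref{bayes.esti.F}, Stein's identity $E_\mu[(\nabla u)^\T(X-\mu)]=E_\mu[\Delta u]$, and $2\Delta u+2(\nabla u)^\T\nabla\log m_\Pi=(2/m_\Pi)\nabla\cdot(m_\Pi\nabla u)$, I would obtain
\begin{align*}
 R(\mu,\delta)-R(\mu,\delta_\Pi)=\int_{\mathbb{R}^d}\phi(x-\mu)\left\{\|\nabla u\|^2+\frac{2}{m_\Pi}\,\nabla\cdot(m_\Pi\nabla u)\right\}\rd x .
\end{align*}
Since $\phi>0$ throughout $\mathbb{R}^d$, it then suffices to find a nonconstant $u$ making the braced integrand $\le 0$ everywhere and $<0$ on a set of positive Lebesgue measure.

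Passing to the radial variable, $\|\nabla u\|^2=U'(r)^2$ and $\nabla\cdot(m_\Pi\nabla u)=r^{1-d}\bigl(r^{d-1}\mathfrak m\,U'\bigr)'$, so with $W(r):=r^{d-1}\mathfrak m(r)U'(r)$ the braced term becomes $\bigl(r^{d-1}\mathfrak m\bigr)^{-1}\bigl(W^2/(r^{d-1}\mathfrak m)+2W'\bigr)$, and the sign requirement is exactly the Riccati inequality $W'\le -W^2/\bigl(2\,r^{d-1}\mathfrak m\bigr)$. After the substitution $t=r^2$, the hypothesis \eqref{inad.suff} says precisely that $\int_1^\infty\rd r/(r^{d-1}\mathfrak m(r))<\infty$, and this convergence is what makes a suitable negative super-solution available: fixing $r_0\ge 1$, I would put $U\equiv 0$ on $[0,r_0]$ and on $(r_0,\infty)$ take $W<0$ with
\begin{align*}
 \frac{1}{W(r)}=-\frac{1}{r-r_0}-A+\frac12\int_{r_0}^{r}\frac{\rd s}{s^{d-1}\mathfrak m(s)},\qquad A>\frac12\int_{r_0}^{\infty}\frac{\rd s}{s^{d-1}\mathfrak m(s)} .
\end{align*}
By the finiteness of that integral, $W$ then stays bounded and negative on $(r_0,\infty)$ with $W(r)\to 0$ (hence $U'(r)\to 0$) as $r\downarrow r_0$, so $U\in C^1(\mathbb{R}^d)$ and $u$ is bounded; the extra $(r-r_0)^{-2}$ in $(1/W)'$ makes the Riccati inequality strict on $(r_0,\infty)$, and $U'<0$ there makes $u$ nonconstant. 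The braced integrand therefore vanishes on $\{\|x\|<r_0\}$ and is strictly negative on $\{\|x\|>r_0\}$, so $R(\mu,\delta)<R(\mu,\delta_\Pi)$ for every $\mu$, and $\delta_\Pi$ is inadmissible.

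The algebra above is routine; the step I expect to be the main obstacle is making the perturbation legitimate. One must verify that $\nabla u$ is regular enough — locally Lipschitz, smooth off the sphere $\{\|x\|=r_0\}$, vanishing at the origin — for Stein's identity to apply, and, more delicately, that the right-hand side of the risk identity is finite for every $\mu$, i.e.\ that $\int\phi(x-\mu)\,(r^{d-1}\mathfrak m)^{-1}\bigl|W^2/(r^{d-1}\mathfrak m)+2W'\bigr|\,\rd x<\infty$. This last point is where the assumed convergence of $\int_1^\infty\rd t/(t^{d/2}m_\Pi(t))$ is used a second time, together with the Gaussian decay of $\phi$ and the smoothness of $m_\Pi$ as a convolution with $\phi$ (which prevents $m_\Pi$ from decaying erratically enough to break integrability), and where one also confirms that the constructed $W$ does not explode. (We take for granted, as is automatic whenever $m_\Pi(x)<\infty$ for all $x$, that $\delta_\Pi$ has finite risk; otherwise it is trivially dominated by $X$.) Conceptually, $\int^\infty\rd r/(r^{d-1}m_\Pi(r^2))$ is finite exactly when the one-dimensional diffusion that \cite{Brown-1971} attaches to $m_\Pi$ is transient, and transience is precisely the feature that supplies the bounded negative super-solution $W$ used above.
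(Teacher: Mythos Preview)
Your argument is correct and follows the same Dasgupta--Strawderman/Riccati route as the paper: reduce the risk difference via Stein's identity to a pointwise sign condition, rewrite that as a first-order Riccati-type constraint on a scalar function of $w=\|x\|^2$ (or $r=\|x\|$), and use the convergence of $\int_1^\infty \rd t/(t^{d/2}m_\Pi(t))$ to construct a solution. The execution differs in one interesting way. The paper works on the SURE level and solves the Riccati \emph{equation} $\Delta(w)=0$ exactly, obtaining a perturbation $-2k_*(\|x\|^2)/m_\Pi(\|x\|^2)\,x$ defined globally on $(0,\infty)$ (with the familiar James--Stein-type singularity at the origin); this yields a second estimator with \emph{identical} risk to $\delta_\Pi$, and strict improvement then comes from averaging the two by strict convexity of the loss. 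You instead build a strict super-solution of the Riccati \emph{inequality} by adding the extra $-1/(r-r_0)$ term and gluing to the zero solution on $[0,r_0]$, which delivers strict pointwise improvement directly with no averaging step and no singularity at the origin, at the cost of a $C^1$ join at $r_0$ and the regularity/integrability checks you flag. Both constructions are standard variants; the paper's version is algebraically cleaner (one explicit antiderivative, no piecewise definition), while yours keeps the perturbation bounded and compactly trivial near $0$, which makes the Stein-identity verification more elementary. Your remark that finite $m_\Pi$ automatically gives finite risk for $\delta_\Pi$ is not literally true in general, but your parenthetical disposes of the infinite-risk case correctly, so the logic is sound.
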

Notice that the statement of Theorem \ref{thm.inad} is equivalent to the inadmissibility condition of \cite{Brown-1971}.
Hence, by  \eqref{tauberian},
the inadmissibility part of Theorem \ref{thm.intro} follows.
More concretely, by Lemma \ref{lem:integrability.1}, the integrability \eqref{eq:inad_integrability}
corresponds to the case either $a>0$ or \{$a=0$ and $c>1$\} and hence
we have a following corollary.
\begin{corollary}\label{cor:inad}
The generalized Bayes estimator with the mixing density $\pi(g;a,b,c)$ 
is inadmissible if either $a>0$ or \{$a=0$ and $c>1$\}.
For these values of $a$ and $c$, the integral 
\eqref{Brown1971.infty.integral} and 
\begin{align}
 \int_1^\infty\frac{\rd g}{g\pi(g;a,b,c)}
\end{align} 
converges.
\end{corollary}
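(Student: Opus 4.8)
The plan is to verify the convergence dichotomy for the elementary integral $\int_1^\infty \rd g/(g\pi(g;a,b,c))$ by direct calculation, transfer it to Brown's integral \eqref{Brown1971.infty.integral} through the Tauberian relation \eqref{tauberian}, and then invoke Theorem \ref{thm.inad}.

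First I would extract the large-$g$ behaviour of the integrand. From \eqref{pigabc},
\[
\frac{1}{g\pi(g;a,b,c)} = g^{-1}(g+1)^{-a}\left(\frac{g+1}{g}\right)^{b}\{\log(g+1)+1\}^{-c},
\]
and since $(g+1)/g\to 1$ and $\log(g+1)+1\sim\log g$, this is asymptotically a positive multiple of $g^{-1-a}(\log g)^{-c}$. Both sides being eventually positive, the comparison test makes $\int_1^\infty \rd g/(g\pi(g;a,b,c))$ converge exactly when $\int_2^\infty g^{-1-a}(\log g)^{-c}\,\rd g$ does. For $a>0$ the exponent $-1-a<-1$ forces convergence for every $c$; for $a<0$ the polynomial growth dominates any power of the logarithm and the integral diverges; and in the borderline case $a=0$ the substitution $u=\log g$ reduces it to $\int^\infty u^{-c}\,\rd u$, which converges if and only if $c>1$. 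This reproduces precisely the dichotomy of Lemma \ref{lem:integrability.1}, so convergence holds exactly when $a>0$ or $\{a=0,\,c>1\}$.

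Next I would pass to Brown's integral. By \eqref{tauberian}, $t^{d/2-1}m_\pi(t;a,b,c)\sim \Gamma(d/2-1-a)2^{d/2-1-a}\pi(t;a,b,c)$, where the standing hypothesis $a<d/2-1$ from \eqref{a.b.c} keeps the constant $\Gamma(d/2-1-a)2^{d/2-1-a}$ finite and strictly positive. Writing $t^{-d/2}m_\pi(t)^{-1}=\bigl(t\cdot t^{d/2-1}m_\pi(t)\bigr)^{-1}$ shows that the integrand of \eqref{Brown1971.infty.integral} is asymptotically a positive multiple of $1/(t\pi(t;a,b,c))$, so the two integrals converge together. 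Convergence of \eqref{Brown1971.infty.integral} then gives inadmissibility by Theorem \ref{thm.inad}, which completes the proof.

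The only genuine (and modest) obstacle is justifying that the Tauberian asymptotic equivalence transfers to an equivalence of integral convergence; this hinges on the Tauberian constant being strictly positive, which is exactly where the hypothesis $a<d/2-1$ enters.
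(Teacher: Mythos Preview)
Your proof is correct and follows essentially the same route as the paper: establish convergence of $\int_1^\infty \rd g/(g\pi(g;a,b,c))$ under the stated conditions (the paper cites Lemma~\ref{lem:integrability.1}, whose proof is itself a comparison argument like yours), transfer this to \eqref{Brown1971.infty.integral} via the Tauberian relation \eqref{tauberian}, and invoke Theorem~\ref{thm.inad}. The only cosmetic difference is that you argue by limit comparison with $g^{-1-a}(\log g)^{-c}$ directly, whereas the paper's lemma uses explicit bounding functions; both are equivalent in content and rigor.
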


\medskip

\begin{proof}[Proof of Theorem \ref{thm.inad}]
By the \cite{Stein-1974} identity, the risk function of an estimator of
the form 
\begin{align}
 \delta_h(x)=x+h(x)=(x_1+h_1(x), \dots , x_d+h_d(x))^\T
\end{align}
is given by
\begin{align*}
 R(\mu,\delta_h)&= 
E\left[\|\delta_h(X)-\mu\|^{2} \right] \\
&= E\left[ \|X-\mu\|^2\right]
+E_{\mu}\left[\|h(X)\|^{2}\right]
+2\sum_{i=1}^d
E_{\mu}\left[(X-\mu)^\T h(X)\right]  \\ 
 &= E\big[\hat{R}_h(X)\big],
\end{align*}
where $\hat{R}_h(x)$ is called the SURE (Stein Unbiased Risk Estimate) and is given by
\begin{align}\label{hat.R.g}
\hat{R}_h(x)=d+ \|h(x)\|^{2}+2\sum_{i=1}^{d}\frac{\partial}{\partial x_{i}}h_{i}(x).
\end{align}
With $ h(x)=\nabla \log m_{\Pi} (\| x \|^{2}) =2 xm'_{\Pi} (\|x\|^2)/m_{\Pi} (\|x\|^2)$ in \eqref{hat.R.g},
the SURE of $\delta_{\Pi} $ given by \eqref{bayes.esti.F}, is
\begin{equation} 
\hat{R}_{\Pi}(\|x\|^2) = d-4w\left(\frac{m'_{\Pi} (w)}{m_{\Pi} (w)}\right)^{2}
+4d\frac{m'_{\Pi} (w)}{m_{\Pi} (w)}+8w\frac{m''_{\Pi} (w)}{m_{\Pi} (w)},
\end{equation}
where $ w=\|x\|^{2}$.
Similarly the SURE of the estimator
\begin{align}
 \delta_{\Pi,k}=\delta_{\Pi}  -2\frac{k(\|x\|^2)}{m_{\Pi} (\|x\|^2)}x
\end{align}
is
\begin{align}
 \hat{R}_{\Pi,k}(\|x\|^2)=\hat{R}_{\Pi} (\|x\|^2)+\Delta(\|x\|^2)
\end{align}
where
\begin{align}
\Delta(w)=4w\frac{k^2(w)}{m_{\Pi} (w)}\left(\frac{1}{m_{\Pi} (w)}-\frac{dk(w)+2wk'(w)}{w k^2(w)}\right).
\end{align}
Now let $ q(w)=w^{d/2}k(w)$. Then
\begin{align}
 \frac{1}{m_{\Pi} (w)}-\frac{dk(w)+2wk'(w)}{w k^2(w)}=2w^{d/2}\left\{\frac{1}{2w^{d/2}m_{\Pi} (w)}+
\frac{\rd}{\rd w}\left(\frac{1}{q(w)}\right)\right\}.
\end{align}
Assume
\begin{align}\label{inad}
c= \int_1^\infty\frac{\rd t}{t^{d/2}m_{\Pi} (t)}<\infty.
\end{align}
Then a solution of the differential equation $\Delta(w)=0$ is given by
\begin{align}
 \frac{1}{q_*(w)}= -\frac{1}{2}\int_1^w \frac{\rd t}{t^{d/2}m_{\Pi} (t)}+c.
\end{align}
Hence, under \eqref{inad}, the estimator
\begin{align}
\delta_{\Pi,k_*}=\delta_{\Pi}  -2\frac{k_*(\|x\|^2)}{m_{\Pi} (\|x\|^2)}x
\end{align}
with $k_*(w)=w^{-d/2}q_*(w)$ has the same risk as that of $ \delta_{\Pi} $.
Since quadratic loss is strictly convex, the estimator given by the  average of $ \delta_{\Pi} $ and $ \delta_{\Pi,k_*}$,
\begin{align}
 \delta_{\Pi}  - \frac{k_*(\|x\|^2)}{m_{\Pi} (\|x\|^2)}x, \label{average_estimator}
\end{align}
strictly improves on $ \delta_{\Pi} $.
\end{proof}
\begin{remark}
 Note
\begin{align}
 \lim_{w\to 0}w k_*(w)
&=\lim_{w\to 0}\frac{w^{-d/2+1}}{-(1/2)\int_1^w \rd t/\{ t^{d/2}m_{\Pi} (t)\}+c} \\
&=\lim_{w\to 0}\frac{(-d/2+1)w^{-d/2}}{-1/\{2 w^{d/2}m_{\Pi} (w)\}} \\
&=(d-2)m_{\Pi} (0).
\end{align}
Then the shrinkage factor of $ \delta_{\Pi} -\{k_*(\|x\|^2)/m_{\Pi} (\|x\|^2)\}x $, defined by
\begin{align}
1+2\frac{m'_{\Pi}(\|x\|^2)}{m_{\Pi}(\|x\|^2)}
-\frac{k_*(\|x\|^2)}{m_{\Pi} (\|x\|^2)},
\end{align}
approaches $-\infty$ as $\|x\|^2\to 0$, which implies that
the average estimator \eqref{average_estimator} is dominated by its positive-part estimator.
Hence the average estimator improves on $ \delta_{\Pi} $, but is still inadmissible.

Generally speaking, for a generalized Bayes and inadmissible estimator $\delta_{\Pi}$,
it is difficult to find an admissible estimator which dominates $\delta_{\Pi}$.
Interestingly, on page 863 of \cite{Brown-1971}, there is some related discussion on this topic.
\end{remark}

\section{Admissibility}
\label{sec:admissi}
Admissibility of (generalized) Bayes estimators is covered in this section. 
Subsection \ref{sec:Blyth} gives the form of Blyth's method used, 
while Subsection \ref{sec:case_I} gives an admissibility result under general $\Pi(\rd g)$. 
Subsection \ref{sec:case_II} considers slowly varying mixing density, $\pi(g;a,b,c)$ with $a=0$,
in order to study the boundary between admissibility and inadmissibility.

\subsection{Blyth's method}
\label{sec:Blyth}
In Subsection \ref{sec:case_I} a form of Blyth's method  (\cite{Blyth-1951}),
 given in Lemma
\ref{lem.Blyth} 
below, is applied to establish admissibility of a class of generalized Bayes estimators. It utilizes a sequence of  proper priors of the form
\begin{align*}
\pi_i(\mu)=\int_0^\infty 
g^{-d/2}\exp\left(-\frac{\|\mu\|^2}{2g}\right)h^2_i(g)\Pi(\rd g)
\end{align*}
 where $ h_i(g)$ satisfies $\int_0^\infty h^2_i(g)\Pi(\rd g)<\infty$ for any fixed $i$
and $\lim_{i\to\infty}h_i(g)=1$ for any fixed $g$.
Specific choice of $h_i$ will be given by \eqref{h_i_1} in Section \ref{sec:case_I} and \eqref{h_i_g} in Appendix \ref{sec:proof}.

Under the  prior $\pi_i(\mu)$, we have
\begin{align*}
 m_i(\|x\|^2)=
\int_{\mathbb{R}^d}\phi(x-\mu)\pi_i(\mu)\rd \mu =
\int_0^\infty(g+1)^{-d/2}\exp\left(-\frac{\|x\|^2}{2(g+1)}\right)h^2_i(g)\Pi(\rd g)
\end{align*}
and
\begin{equation}\label{delta_i}
 \delta_i=x+ \nabla_x\log m_i(\|x\|^2)=\left(1-\frac{\int_0^\infty(g+1)^{-d/2-1}\exp\left(-\frac{\|x\|^2}{2(g+1)}\right)h^2_i(g)\Pi(\rd g)}{\int_0^\infty(g+1)^{-d/2}\exp\left(-\frac{\|x\|^2}{2(g+1)}\right)h^2_i(g)\Pi(\rd g)}\right)x.
\end{equation}
The Bayes risk difference between $\delta_\pi$ and $\delta_i$, with respect to $\pi_i(\mu)$, is
\begin{align*}
 \Delta_i=\int \left\{R(\delta_\pi,\mu)-R(\delta_i,\mu)\right\}\pi_i(\mu)\rd \mu,
\end{align*}
which is rewritten as
\begin{align}
 \Delta_i &= 
 \int_{\mathbb{R}^d} \int_{\mathbb{R}^d} \left\{ \|\delta_\pi - \mu\|^2 - \|\delta_i - \mu\|^2 
\right\} \phi(x-\mu)\pi_i(\mu)\rd \mu \rd x \\
&= 
\int_{\mathbb{R}^d} \bigg\{ \big( \| \delta_\pi \|^2 -\| \delta_i\|^2\big) m_i(\|x\|^2)
-2(\delta_\pi -\delta_i)^\T \int_{\mathbb{R}^d} \mu \phi(x-\mu)\pi_i(\mu)\rd \mu \bigg\}\rd x \\
 &= \int_{\mathbb{R}^d} \|  \delta_\pi - \delta_i  \|^{2} m_i(\|x\|^2) \rd x.\label{general.Delta.i}
\end{align}
The following form of Blyth's sufficient condition shows that 
$\lim_{i\to\infty}\Delta_i=0$ implies admissibility. The result applies to general sequences of prior densities under the given conditions and not just those constructed as above.
\begin{lem}\label{lem.Blyth}
Suppose $ \pi_i(\mu)$ is an increasing (in $i$) sequence of proper priors, 
$\lim_{i\to\infty}\pi_i(\mu)=\pi(\mu)$ and 
$\pi_i(\mu)>0$ for all $\mu$. Then $\delta_\pi$ is admissible if $\Delta_i$ satisfies 
$\lim_{i\to\infty}\Delta_i=0$.
\end{lem}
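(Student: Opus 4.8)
The plan is to establish the classical Blyth argument in the form stated: assume for contradiction that $\delta_\pi$ is inadmissible, so there is an estimator $\delta'$ with $R(\mu,\delta')\le R(\mu,\delta_\pi)$ for all $\mu$ and strict inequality on an open set, and then derive a contradiction with $\lim_{i\to\infty}\Delta_i=0$. First I would recall that the generalized Bayes estimator $\delta_\pi$ (under quadratic loss, with $m_\pi$ smooth and finite by \eqref{minimum}) is the pointwise limit of the proper Bayes estimators $\delta_i$, and that the quantity $\Delta_i$ defined above is precisely the Bayes-risk improvement of $\delta_i$ over $\delta_\pi$ against the proper prior $\pi_i(\mu)\,\rd\mu$; the identity \eqref{general.Delta.i} already rewrites this as $\int_{\mathbb{R}^d}\|\delta_\pi-\delta_i\|^2 m_i(\|x\|^2)\,\rd x\ge 0$, so $\Delta_i\ge 0$ always.

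The heart of the argument is the standard chain of inequalities. Since $\pi_i$ is proper, $\delta_i$ is the unique (up to null sets) Bayes estimator against $\pi_i$, so for the competing estimator $\delta'$ we have, writing $r(\pi_i,\delta)=\int R(\mu,\delta)\pi_i(\mu)\,\rd\mu$,
\begin{align*}
 r(\pi_i,\delta_i)\le r(\pi_i,\delta')\le r(\pi_i,\delta_\pi),
\end{align*}
hence $\Delta_i = r(\pi_i,\delta_\pi)-r(\pi_i,\delta_i)\ge r(\pi_i,\delta_\pi)-r(\pi_i,\delta') = \int \{R(\mu,\delta_\pi)-R(\mu,\delta')\}\pi_i(\mu)\,\rd\mu$. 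The integrand on the right is nonnegative everywhere and, by the assumed strict domination, is bounded below by some $\varepsilon>0$ on an open ball $B$. Because $\pi_i(\mu)\uparrow\pi(\mu)$ with $\pi_i>0$, monotone convergence gives $\liminf_i \int_B \{R(\mu,\delta_\pi)-R(\mu,\delta')\}\pi_i(\mu)\,\rd\mu \ge \varepsilon\int_B \pi_1(\mu)\,\rd\mu>0$ (using that $\pi_1$ is already strictly positive, so the integral over a ball is positive). Therefore $\liminf_i \Delta_i>0$, contradicting $\lim_{i\to\infty}\Delta_i=0$. This forces admissibility of $\delta_\pi$.

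A few technical points need care. One must justify that $r(\pi_i,\delta_\pi)<\infty$ so that the risk difference $\Delta_i$ is well-defined and finite; this follows because $\delta_i$ is Bayes and finite-risk against the proper $\pi_i$ and $\Delta_i\ge0$ is assumed to have a (finite, namely zero) limit, or alternatively from an a priori bound on $r(\pi_i,\delta_\pi)$ — in any case one only ever needs the \emph{difference} to be controlled. One also needs the dominating estimator $\delta'$ to have finite Bayes risk against each $\pi_i$; if not, then trivially $r(\pi_i,\delta')\le r(\pi_i,\delta_\pi)<\infty$ from $R(\mu,\delta')\le R(\mu,\delta_\pi)$, so this is automatic. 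Finally, the passage from "$R(\mu,\delta_\pi)>R(\mu,\delta')$ somewhere" to "on an open set with a uniform gap $\varepsilon$" uses lower semicontinuity of $\mu\mapsto R(\mu,\delta_\pi)-R(\mu,\delta')$, or more simply the fact that if two risk functions differ at a point they differ on a set of positive Lebesgue measure (by continuity of risk functions in this Gaussian location model), and one may shrink to a ball on which the difference exceeds half its value at an interior point.

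The main obstacle is purely bookkeeping rather than conceptual: ensuring all the Bayes risks and risk differences that appear are finite (or that one works only with differences that are), and justifying the interchange of limit and integral via monotone convergence given the hypotheses $\pi_i\uparrow\pi$ and $\pi_i>0$. Once those are in hand, the contradiction is immediate from the sandwich $\Delta_i\ge \int\{R(\mu,\delta_\pi)-R(\mu,\delta')\}\pi_i(\mu)\,\rd\mu$ and the strict positivity of the right-hand side in the limit.
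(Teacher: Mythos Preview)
Your argument is correct and follows the same Blyth-by-contradiction skeleton as the paper, but there is one technical difference worth noting. You work directly with the dominating estimator $\delta'$ and invoke continuity of risk functions in the Gaussian location model to extract an open ball $B$ and a uniform gap $\varepsilon>0$, then bound $\Delta_i$ below by $\varepsilon\int_B\pi_1(\mu)\,\rd\mu>0$. The paper instead passes to the averaged estimator $\delta''=(\delta_\pi+\delta')/2$: by strict convexity of quadratic loss together with the fact that $\int\|\delta_\pi-\delta'\|^2\phi(x-\mu)\,\rd x>0$ for \emph{every} $\mu$ (propagated from the single point $\mu_0$ via positivity of the density ratio $\phi(x-\mu)/\phi(x-\mu_0)$), one gets $R(\mu,\delta'')<R(\mu,\delta_\pi)$ for all $\mu$, so the integrand $R(\mu,\delta_\pi)-R(\mu,\delta'')$ is strictly positive everywhere and the lower bound $\int\{R(\mu,\delta_\pi)-R(\mu,\delta'')\}\pi_1(\mu)\,\rd\mu>0$ follows immediately from $\pi_1>0$ without any appeal to continuity of risk or to an open-set argument. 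Your route is the more classical Blyth presentation; the paper's averaging trick is a bit slicker in that it sidesteps the regularity discussion you flag in your final paragraph.
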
 
\begin{proof}
  Suppose that $ \delta_\pi$ is inadmissible and hence that there exists a  $\delta'$ satisfies
  \begin{equation}\label{eq.weak}
R(\mu,\delta') \leq R(\mu,\delta_{\pi })
  \end{equation}
for all $\mu$ and
\begin{equation}\label{eq.strict}
R(\mu,\delta') < R(\mu,\delta_{\pi }) \text{ for some }\mu_0.
\end{equation}
By \eqref{eq.strict}, we have
\begin{align*}
 \int_{\mathbb{R}^d} \| \delta_\pi(x)-\delta'(x)\|^2 \phi(x-\mu_0)\rd x>0.
\end{align*}
Further we have
\begin{align*}
\int_{\mathbb{R}^d} \| \delta_\pi(x)-\delta'(x)\|^2 \phi(x-\mu)\rd x
=\int_{\mathbb{R}^d} \| \delta_\pi(x)-\delta'(x)\|^2 \frac{\phi(x-\mu)}{\phi(x-\mu_0)}\phi(x-\mu_0)\rd x.
\end{align*}
Since the ratio $\phi(x-\mu)/\phi(x-\mu_0)$
is continuous in $x$ and positive, it follows that
\begin{align*}
\int_{\mathbb{R}^d} \| \delta_\pi(x)-\delta'(x)\|^2 \phi(x-\mu)\rd x>0
\end{align*}
for all $\mu$.

Set $\delta''=(\delta_\pi+\delta')/2$. Then we have
\begin{align*}
 \|\delta''-\mu\|^2=\frac{\|\delta_\pi-\mu\|^2+\|\delta'-\mu\|^2}{2}
-\frac{\|\delta_\pi-\delta'\|^2}{4},
\end{align*}
and
\begin{align*}
R(\mu,\delta'') 
&=E\left(\|\delta''-\mu\|^2\right) \\
&<(1/2)E\left(\|\delta'-\mu\|^2\right) +(1/2)E\left(\|\delta_\pi -\mu\|^2\right)\\
 &= \frac{1}{2}\left\{R(\mu,\delta')+R(\mu,\delta_\pi)\right\} \\
 &\leq  R(\mu,\delta_\pi),
\end{align*}
for all $\mu$.
Then we have
\begin{align*}
\Delta_i
&=\int_{\mathbb{R}^d} \left\{
R(\mu,\delta_{\pi })-R(\mu,\delta_i) \right\}
\pi_i(\mu)
 \rd  \mu \\
 & \geq \int_{\mathbb{R}^d} \left\{
R(\mu,\delta_{\pi })-R(\mu,\delta'') \right\}
\pi_i(\mu)
 \rd  \mu \\
&\geq \int_{\mathbb{R}^d} \left\{
R(\mu,\delta_{\pi })-R(\mu,\delta'') \right\}
\pi_1(\mu)
 \rd  \mu \\
&>0
\end{align*}
which contradicts $ \Delta_i \to 0$ as $i\to\infty$.
\end{proof}
Note that the integrand of $\Delta_i$ in \eqref{general.Delta.i} tends to $0$ as $i$ tends to infinity. 
Hence, in using the lemma to show admissibility, the bulk of the remainder of the proof consists in showing that the integrand is bounded by an integrable function. 
Then, by the dominated convergence theorem, $\lim_{i\to\infty}\Delta_i=0$ is satisfied so that $\delta_\pi$ is admissible.
Establishment of this dominating function constitutes much of the technical development for the admissibility results given in Sections \ref{sec:case_I} and \ref{sec:case_II}.

\subsection{A general admissibility result for mixture priors}
\label{sec:case_I}
This subsection is devoted to establishing the following result.
\begin{thm}\label{thm:ad.case.i}
The (generalized)  Bayes estimator $\delta_\pi$ given by \eqref{estimator_pi} is admissible if
\begin{align}\label{brown.simple}
 \int_0^\infty \frac{\Pi(\rd g)}{g+1}  <\infty.
\end{align}
\end{thm}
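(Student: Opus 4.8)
The plan is to prove admissibility via Blyth's method in the form of Lemma~\ref{lem.Blyth}, applied to the approximating proper priors $\pi_i$ described in Subsection~\ref{sec:Blyth}. I would build these from a sequence of weights $h_i$ satisfying $0\le h_i(g)\le 1$, with $h_i(g)$ increasing in $i$, $h_i(g)\to 1$ for every $g$, and $\int_0^\infty h_i^2(g)\,\Pi(\rd g)<\infty$ for each $i$; the hypothesis \eqref{brown.simple} guarantees such a sequence exists (for instance $h_i^2(g)=i/(i+g)$, since on $g\ge 1$ one has $i/(i+g)\le 2i/(g+1)$). Then $\pi_i$ is proper, increasing in $i$, positive, and increases to $\pi$. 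By \eqref{general.Delta.i} the Bayes risk difference is $\Delta_i=\int_{\mathbb{R}^d}\|\delta_\pi-\delta_i\|^2 m_i(\|x\|^2)\,\rd x$, and since $h_i\uparrow 1$ monotone convergence gives $m_i\to m_\pi$ and $m_i'\to m_\pi'$ pointwise, hence $\delta_i\to\delta_\pi$ and the integrand tends to $0$ pointwise. So, by dominated convergence, it suffices to bound the integrand, uniformly in $i$, by a fixed function integrable over $\mathbb{R}^d$; constructing that dominating function is the whole content of the proof.

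For the bound, the key structural fact is that $\delta_\pi$ and $\delta_i$ are scalar shrinkages of $x$: writing $w=\|x\|^2$, we have $\delta_\pi=(1+2m_\pi'(w)/m_\pi(w))x$ and $\delta_i=(1+2m_i'(w)/m_i(w))x$, so $\|\delta_\pi-\delta_i\|^2 m_i(w)=4w\,(m_\pi'm_i-m_i'm_\pi)^2/(m_\pi^2 m_i)$. Set $\psi_g(w)=(g+1)^{-d/2}e^{-w/(2(g+1))}$ and let $\mu_w$ be the probability measure proportional to $\psi_g(w)\,\Pi(\rd g)$. Expanding $m_\pi'm_i-m_i'm_\pi$ as a double integral over two independent copies of the mixing variable and symmetrizing yields the covariance identity $m_\pi'm_i-m_i'm_\pi=\tfrac12\,m_\pi(w)^2\,\mathrm{Cov}_{\mu_w}\!\bigl(1/(g+1),\,h_i^2\bigr)$, to which Cauchy--Schwarz gives $|m_\pi'm_i-m_i'm_\pi|\le\tfrac12 m_\pi(w)^2\sqrt{\mathrm{Var}_{\mu_w}(1/(g+1))\,\mathrm{Var}_{\mu_w}(h_i^2)}$.

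The cancellation now comes from $0\le h_i^2\le 1$, which forces $\mathrm{Var}_{\mu_w}(h_i^2)\le E_{\mu_w}[h_i^2]=m_i(w)/m_\pi(w)$ and so absorbs the remaining $1/m_i$; together with $\mathrm{Var}_{\mu_w}(1/(g+1))\le E_{\mu_w}[1/(g+1)^2]$ this collapses the estimate to
\[
\|\delta_\pi-\delta_i\|^2 m_i(\|x\|^2)\le \|x\|^2\int_0^\infty(g+1)^{-d/2-2}e^{-\|x\|^2/(2(g+1))}\,\Pi(\rd g),
\]
a bound independent of $i$. Integrating over $\mathbb{R}^d$ via Fubini and the Gaussian identity $\int_{\mathbb{R}^d}\|x\|^2 e^{-\|x\|^2/(2(g+1))}\,\rd x=d\,(g+1)\,(2\pi(g+1))^{d/2}$ turns the right-hand side into $(2\pi)^{d/2}d\int_0^\infty\Pi(\rd g)/(g+1)$, which is finite by \eqref{brown.simple}. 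Dominated convergence then gives $\Delta_i\to 0$, and Lemma~\ref{lem.Blyth} gives admissibility of $\delta_\pi$.

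The step I expect to be the real obstacle is exactly this cancellation. A crude split $\|\delta_\pi-\delta_i\|^2\le 2\|\delta_\pi-x\|^2+2\|\delta_i-x\|^2$ leads, after integration, only to a multiple of $\int_0^\infty(g+1)\,\Pi(\rd g)$, which diverges for many priors permitted by \eqref{minimum}, and indeed by \eqref{brown.simple}; one genuinely has to use that $\delta_\pi$ and $\delta_i$ differ only through $1-h_i^2$. It is the covariance identity together with the elementary bound $\mathrm{Var}(Y)\le E[Y]$ for $0\le Y\le 1$ that converts this into the extra factor $(g+1)^{-2}$, which is precisely what makes the final integral converge under the hypothesis of the theorem. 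The remaining ingredients---choosing $h_i$ explicitly, justifying the pointwise limits and the differentiation and Fubini interchanges---are routine, although a more delicate choice of $h_i$ (a logarithmic truncation) will be needed for the boundary case treated in Subsection~\ref{sec:case_II}.
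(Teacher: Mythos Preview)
Your argument is correct and follows essentially the paper's route: Blyth's method with the same sequence $h_i^2(g)=i/(g+i)$, leading to the same dominating function $w\int_0^\infty(g+1)^{-2}F(w,g)\,\Pi(\rd g)$ whose integral over $\mathbb{R}^d$ reduces to a constant times $\int_0^\infty\Pi(\rd g)/(g+1)$. Your packaging of the bound via the covariance identity together with $\mathrm{Var}(Y)\le E[Y]$ for $0\le Y\le 1$ is a neat variant (it even gives constant $1$ instead of $4$), but note that the paper obtains the identical dominating function by precisely the split you dismiss: it applies $(A-B)^2\le 2A^2+2B^2$ to the two shrinkage factors---equivalently $\|\delta_\pi-\delta_i\|^2\le 2\|\delta_\pi-x\|^2+2\|\delta_i-x\|^2$---and then Cauchy--Schwarz to each term, which also yields $4w\int_0^\infty(g+1)^{-2}F(w,g)\,\Pi(\rd g)$, so your aside that this split ``leads only to a multiple of $\int_0^\infty(g+1)\,\Pi(\rd g)$'' is not accurate once Cauchy--Schwarz is used.
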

Clearly any proper prior on $g$ satisfies \eqref{brown.simple}. Further, even if the prior is improper, i.e., $ \int_0^\infty \Pi(\rd g)=\infty$, 
the corresponding generalized Bayes estimator is admissible under \eqref{brown.simple}.
Further, by Lemma \ref{lem:integrability.1}, 
we have a following corollary.
\begin{corollary}\label{cor:ad}
The (generalized) Bayes estimator with mixing density $\pi(g;a,b,c)$ given by
\eqref{pigabc}
is admissible if either $a<0$ or \{$a=0$ and $c<-1$\}. 
For these values of $a$ and $c$, the integral 
\ref{Brown1971.infty.integral} and 
\begin{align}
 \int_1^\infty\frac{\rd g}{g\pi(g;a,b,c)}
\end{align} 
diverges.
\end{corollary}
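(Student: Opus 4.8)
The plan is to read Corollary~\ref{cor:ad} off as a direct consequence of Theorem~\ref{thm:ad.case.i}, of the cited integrability Lemma~\ref{lem:integrability.1}, and of the Tauberian relation~\eqref{tauberian}; the only genuine work is an elementary asymptotic analysis of the density~\eqref{pigabc}. For the mixing density $\pi(g;a,b,c)$ the prior measure is $\Pi(\rd g)=\pi(g;a,b,c)\rd g$, so the admissibility hypothesis~\eqref{brown.simple} of Theorem~\ref{thm:ad.case.i} reads $\int_0^\infty \pi(g;a,b,c)(g+1)^{-1}\rd g<\infty$. First I would verify that this integral converges exactly on the stated parameter range, and then invoke Theorem~\ref{thm:ad.case.i} to conclude admissibility.

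To analyze the integral I would split it at a fixed point, say $g=1$, and examine the two ends separately. Near $g=0$ one has $(g+1)^a\to1$, $\{g/(g+1)\}^b\sim g^b$ and $\{\log(g+1)+1\}^c\to1$, so the integrand behaves like $g^b$, which is integrable precisely because $b>-1$ in~\eqref{a.b.c}. Near $g=\infty$ one has $(g+1)^a\sim g^a$, $\{g/(g+1)\}^b\to1$ and $\{\log(g+1)+1\}^c\sim(\log g)^c$, so the integrand behaves like $g^{a-1}(\log g)^c$. The standard Bertrand integral $\int^\infty g^{a-1}(\log g)^c\rd g$ converges if and only if $a<0$, or $a=0$ together with $c<-1$. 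Hence~\eqref{brown.simple} holds exactly when $a<0$ or $\{a=0 \text{ and } c<-1\}$, and Theorem~\ref{thm:ad.case.i} delivers admissibility on this range.

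For the divergence statements, the divergence of $\int_1^\infty \{g\,\pi(g;a,b,c)\}^{-1}\rd g$ is immediate from Lemma~\ref{lem:integrability.1}, since the admissibility range $a<0$ or $\{a=0,\ c<-1\}$ is contained in that lemma's non-integrability case $a<0$ or $\{a=0,\ c\le1\}$. To transfer this to~\eqref{Brown1971.infty.integral} I would use the Tauberian relation~\eqref{tauberian}: because $t^{d/2-1}m_\pi(t;a,b,c)/\pi(t;a,b,c)$ tends to the finite positive constant $\Gamma(d/2-1-a)2^{d/2-1-a}$ (finite and positive since $a<d/2-1$), we have $t^{d/2}m_\pi(t;a,b,c)\sim C\,t\,\pi(t;a,b,c)$ with $C>0$ as $t\to\infty$. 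The integrands $\{t^{d/2}m_\pi(t)\}^{-1}$ and $\{t\,\pi(t)\}^{-1}$ are therefore asymptotically proportional, so the integrals~\eqref{Brown1971.infty.integral} and $\int_1^\infty\{g\,\pi(g;a,b,c)\}^{-1}\rd g$ converge or diverge together; in particular both diverge on the admissibility range.

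The argument has no serious obstacle; it is essentially bookkeeping that converts the abstract hypothesis~\eqref{brown.simple} into an explicit regime in $(a,c)$. The one point requiring a little care is the standard justification that convergence of an integral may be decided from the pointwise asymptotics of its integrand, which is legitimate here because all integrands are eventually positive and the relevant limiting ratios are finite and strictly positive (the latter guaranteed by $a<d/2-1$). A secondary point worth recording is that the admissibility range $\{a=0,\ c<-1\}$ is strictly smaller than Brown's non-integrability range $\{a=0,\ c\le1\}$, so the boundary band $a=0$, $-1\le c\le1$ is deliberately left to the slowly varying analysis of Subsection~\ref{sec:case_II}.
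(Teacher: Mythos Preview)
Your proposal is correct and follows essentially the same route as the paper: the paper derives Corollary~\ref{cor:ad} directly from Theorem~\ref{thm:ad.case.i} together with Lemma~\ref{lem:integrability.1}, and then passes to~\eqref{Brown1971.infty.integral} via the Tauberian relation~\eqref{tauberian}, exactly as you outline. The only cosmetic difference is that where you carry out the two-endpoint asymptotic analysis of $\int_0^\infty \pi(g;a,b,c)(g+1)^{-1}\rd g$ inline, the paper packages that computation as Part~\ref{lem:integrability.1.2} of Lemma~\ref{lem:integrability.1} and simply cites it; your argument is in effect a proof sketch of that part of the lemma.
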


\medskip

\begin{proof}[Proof of Theorem \ref{thm:ad.case.i}]
Let
\begin{align}\label{h_i_1}
 h_i^2(g)=\frac{i}{g+i},
\end{align}
which is increasing in $i$ and is such that $\lim_{i\to\infty}h_i(g)=1$, for any fixed $g$.
The prior $\pi_i(\mu)$ is proper for any fixed $i$ since the mixture distribution is proper, because
\begin{align}
 \int_0^\infty h^2_i(g)\Pi(\rd g)\leq i\int_0^\infty \frac{\Pi(\rd g)}{g+1}<\infty.
\end{align}
The integrand of $\Delta_i$ given in \eqref{general.Delta.i} is 
\begin{align}
& \|  \delta_\pi- \delta_i  \|^{2} m_i(w) \label{del-deli*m}\\
&= w \left(\frac{\int_0^\infty (g+1)^{-1}F( w ,g)h_i^2(g)\Pi(\rd g)}
{\int_0^\infty F( w ,g) h_i^2(g)\Pi(\rd g)}
-\frac{\int_0^\infty(g+1)^{-1}F( w ,g)\Pi(\rd g)}{\int_0^\infty F( w ,g)\Pi(\rd g)}\right)^2
\int_0^\infty F( w ,g)h_i^2(g)\Pi(\rd g),
\end{align}
where $w=\|x\|^2$ and 
\begin{align}\label{eq:Fwg}
 F( w ,g)=(g+1)^{-d/2}\exp\left(-\frac{ w }{2(g+1)}\right).
\end{align}
Applying
the inequality
\begin{equation}\label{eq.4}
\left(\sum_{i=1}^k a_i\right)^2\leq k \sum_{i=1}^k a_i^2,
\end{equation}
for $k\in\mathbb{Z}$ to \eqref{del-deli*m}, we have
\begin{align}
& \|  \delta_\pi- \delta_i  \|^{2} m_i( w ) \\
& \leq 2 w \left( 
\frac{\{\int_0^\infty (g+1)^{-1}F( w ,g)h_i^2(g)\Pi(\rd g)\}^2}
{\int_0^\infty F( w ,g) h_i^2(g)\Pi(\rd g)} 
+
\frac{\{\int_0^\infty (g+1)^{-1}F( w ,g) \Pi(\rd g)\}^2}
{\int_0^\infty F( w ,g)\Pi(\rd g)}\right). \label{eq:case_I_1}
\end{align}
Further applying the Cauchy-Schwarz inequality to the first and second terms of \eqref{eq:case_I_1}, 
we have
\begin{align*}
\| \delta_\pi- \delta_i \|^{2} m_i( w )  \leq 
4 w \int_0^\infty \frac{F( w ,g)}{(g+1)^{2}}\Pi(\rd g).
\end{align*}
This is precisely the bound required in order to apply the dominated convergence theorem to demonstrate that $\Delta_{i} \to 0$, since
\begin{align*}
 \int_{\mathbb{R}^d}\|  \delta_\pi- \delta_i  \|^{2} m_i(\|x\|^2)\rd x 
&\leq 4\int_{\mathbb{R}^d}\int_0^\infty\frac{\|x\|^2}{(g+1)^{d/2+2}}\exp\left(-\frac{\|x\|^2}{2(g+1)}\right)
\Pi(\rd g)\rd x  \\
&=
4\int_{\mathbb{R}^d}\|y\|^2 \exp\left(-\frac{\|y\|^2}{2}\right)\rd y
\int_0^\infty \frac{\Pi(\rd g)}{g+1} \\ &<\infty,
\end{align*}
which completes the proof.
\end{proof}

\begin{remark}\label{rem:unbounded.risk}
The prior $\mu\sim N_d(0,gI_d)$ corresponds to the point prior on $g$ in \eqref{prior.pimu}. 
The proper Bayes estimator is 
\begin{align*}
 \frac{g}{g+1}X
\end{align*}
 with unbounded risk 
\begin{align*}
\frac{g^2 d+\|\mu\|^2}{(g+1)^2}.
\end{align*}
Theorem \ref{thm:ad.case.i} covers this case 
whereas \eqref{Brown1971.infty} of \cite{Brown-1971}'s result is not satisfied by the proper Bayes admissible estimator $gX/(g+1)$.
\end{remark}

\subsection{On the boundary between admissibility and inadmissibility}
\label{sec:case_II}
For the class of densities $\pi(g;a,b,c)$, Corollaries \ref{cor:inad} and \ref{cor:ad} settle the issue of admissibility/inadmissibility for all values of $a$ and $c$ except for the cases \{$a=0$ and $|c|\leq 1$\}, where, in particular,  the case $a=0$, $b=0$, $c=0$ is corresponding to 
the \cite{Stein-1974} prior given by \eqref{stein.prior} since 
\begin{align}
\int_0^\infty g^{-d/2}\exp\left(-\frac{\|\mu\|^2}{2g}\right)\rd g=
\Gamma(d/2-1)2^{d/2-1}\|\mu\|^{2-d}.
\end{align}
For the cases \{$a=0$ and $|c|\leq 1$\}, the non-integrability
\begin{align}
 \int_1^\infty\frac{\rd g}{g\pi(g;a,b,c)}=\infty
\end{align}
follows.
Theorem \ref{thm.intro} shows that these values, near the boundary between admissibility and inadmissibility correspond to admissibility.
Recall that Theorem \ref{thm.intro} is a corollary of \cite{Brown-1971}.
Here, we  provide a self-contained proof of the admissibility result.
\begin{thm}\label{thm:main}
Assume the measure $\Pi(\rd g)$ has the density $\pi(g;a,b,c)$ with $a=0$, $b\geq 0$ and $|c|\leq 1$.
Then the corresponding generalized Bayes estimator is admissible.
\end{thm}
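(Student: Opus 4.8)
The plan is to establish admissibility via Blyth's method in the form of Lemma~\ref{lem.Blyth}, so that the whole argument reduces to producing a sequence $h_i$ with $h_i\uparrow 1$ pointwise, $\int_0^\infty h_i^2(g)\,\Pi(\rd g)<\infty$ for each $i$, and $\Delta_i\to 0$, where $\Delta_i$ is the Bayes risk difference written in \eqref{general.Delta.i}. Since $a=0$ and $|c|\le 1$, Lemma~\ref{lem:integrability.1} gives the divergence
\begin{align*}
 G_i:=\int_1^i\frac{\rd g}{g\,\pi(g;0,b,c)}\ \longrightarrow\ \infty\qquad(i\to\infty),
\end{align*}
and it is precisely this divergence that makes a good Blyth sequence available. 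I would take $h_i$ to be (possibly after smoothing) the truncated, normalized integral
\begin{align*}
 h_i(g)=
 \begin{cases}
 1, & 0\le g\le 1,\\[1mm]
 1-G_i^{-1}\displaystyle\int_1^g\frac{\rd s}{s\,\pi(s;0,b,c)}, & 1\le g\le i,\\[1mm]
 0, & g\ge i,
 \end{cases}
\end{align*}
which is the choice recorded as \eqref{h_i_g} in the Appendix. For each fixed $i$ it is supported on $[0,i]$, where $\Pi$ is finite because $\pi(\cdot;0,b,c)$ is locally integrable, so $\pi_i$ is proper; it is nondecreasing in $i$ with $\lim_{i\to\infty}h_i(g)=1$ for every $g$; and, crucially, its Dirichlet energy against the natural weight is
\begin{align*}
 \int_1^\infty\bigl(h_i'(g)\bigr)^2\,g\,\pi(g;0,b,c)\,\rd g
 =\frac{1}{G_i^2}\int_1^i\frac{\rd g}{g\,\pi(g;0,b,c)}=\frac{1}{G_i}\ \longrightarrow\ 0 .
\end{align*}

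Next I would convert the integrand $\|\delta_\pi-\delta_i\|^2 m_i(w)$ of \eqref{general.Delta.i} into something controlled by this vanishing energy. Writing $w=\|x\|^2$ and $F(w,g)$ as in \eqref{eq:Fwg}, the formula \eqref{estimator_pi} and its analogue for $\delta_i$ exhibit $\delta_\pi-\delta_i=x\bigl(\beta_i(w)-\beta(w)\bigr)$, where $\beta(w)$ and $\beta_i(w)$ are the posterior means of $(g+1)^{-1}$ under the measures proportional to $F(w,g)\,\Pi(\rd g)$ and $F(w,g)h_i^2(g)\,\Pi(\rd g)$, respectively. Clearing denominators and symmetrizing the resulting double $g$-integral against $F(w,g)F(w,g')\,\Pi(\rd g)\,\Pi(\rd g')$ produces the kernel
\begin{align*}
 \bigl((g+1)^{-1}-(g'+1)^{-1}\bigr)\bigl(h_i^2(g)-h_i^2(g')\bigr),
\end{align*}
whose two factors have the same sign because $h_i$ is nonincreasing; writing $h_i^2(g)-h_i^2(g')=\int_{g'}^{g}(h_i^2)'(s)\,\rd s$, exchanging the order of integration, and applying the Cauchy--Schwarz inequality in $s$ against a suitable weight then peels off exactly a factor of $\int(h_i')^2(\cdot)\,\rd g$. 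Integrating over $x$ and using the change of variables $y=x/\sqrt{g+1}$ as in the proof of Theorem~\ref{thm:ad.case.i}, the target is a bound of the shape
\begin{align*}
 \Delta_i\ \le\ \frac{C}{G_i}
\end{align*}
with $C$ independent of $i$, after which $\Delta_i\to 0$ and admissibility follows from Lemma~\ref{lem.Blyth}. Equivalently, since the integrand of \eqref{general.Delta.i} tends to $0$ pointwise, it suffices to dominate it by a fixed integrable function, and the dominated convergence argument noted after Lemma~\ref{lem.Blyth} applies.

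The hard part is this last estimate: controlling \emph{uniformly in $i$} the quantity that multiplies the Dirichlet energy. The term-by-term bounding that sufficed for Theorem~\ref{thm:ad.case.i} is useless here, because in the regime $a=0$, $|c|\le1$ the integral $\int_0^\infty\Pi(\rd g)/(g+1)$ diverges; one must genuinely keep the cancellation in $\delta_\pi-\delta_i$ and exploit the slow variation of $\pi(\cdot;0,b,c)$ to tame the contribution of the tail $g\gtrsim\|x\|^2$, where the posterior law of $g$ given $x$ places most of its mass and where $h_i$ is far from $1$. This is where the regularly varying / Tauberian input \eqref{tauberian}, the sharp two-sided estimates on $m_\pi$ (and hence on $m_i\le m_\pi$) and their first two derivatives collected in the Appendix, and the elementary lower bound $m_i(w)\ge\int_0^1 F(w,g)\,\Pi(\rd g)$ coming from the region $g\le1$ on which $h_i\equiv1$ all enter; the assumption $b\ge0$ is used to keep $\pi(g;0,b,c)$ bounded and well-behaved near $g=0$ so that these small-$g$ bounds hold. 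Once the uniform bound is secured, the proof is complete.
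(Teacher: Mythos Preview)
Your overall architecture is right: Blyth's method via Lemma~\ref{lem.Blyth}, a truncated sequence $h_i\uparrow 1$ built from a divergent integral, and reduction of $\Delta_i\to 0$ to a Dirichlet-type energy. But two things diverge from the paper in ways that matter.

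First, your $h_i$ is \emph{not} the one in \eqref{h_i_g}. The paper takes the universal weight $1/\{(t+1)L(t)\}$ with $L(t)=\log(t+1)+1$, i.e.
\[
h_i(g)=1-\frac{\int_0^g \rd t/\{(t+1)L(t)\}}{\int_0^i \rd t/\{(t+1)L(t)\}}\quad(0<g<i),
\]
independent of $b$ and $c$; it is not the $\pi$-dependent choice $1/\{s\,\pi(s;0,b,c)\}$ you wrote down. Your choice would also give a vanishing energy against the relevant weight, so this by itself is not fatal, but the claim that it matches \eqref{h_i_g} is incorrect.

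Second, and more substantively, the paper does \emph{not} attack $\beta_i(w)-\beta(w)$ by the double-integral symmetrization you propose. Instead it integrates the numerator of $\delta_i$ by parts in $g$ (this is \eqref{parts}), using
\[
(g+1)\frac{\pi'(g)}{\pi(g)}=\frac{b}{g}+\frac{c}{L(g)},
\]
which recasts $\|\delta_\pi-\delta_i\|^2 m_i(w)$ as $4\,m_i(w)\,w^{-1}\{\pi(0)A_1-2A_2+bA_3+cA_4\}^2$, where $A_1$ is a boundary term, $A_2$ is the term carrying $h_ih_i'$ linearly, and $A_3,A_4$ are differences of posterior means of $g^{-1}$ and $L(g)^{-1}$. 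After the crude inequality \eqref{eq.4}, each $A_j$ is bounded separately: $A_1$ and $A_3$ are easy; $A_2$ reduces by Cauchy--Schwarz to $\int (g+1)^2F(w,g)\pi(g)\{h_i'(g)\}^2\,\rd g$, and this is where $b\ge 0$, $|c|\le 1$ enter, through $\pi(g)\le L(g)$ and the explicit $(g+1)L(g)\,\sup_i|h_i'(g)|$ control in \eqref{h_i_no_sup}; the genuinely delicate piece is $A_4$ at $c=1$, handled by a sharp Tauberian two-term estimate showing a cancellation between $\int F\pi/L^2$ and $\{\int F\pi/L\}^2/\int F\pi$. The role of $b\ge0$ in the paper is thus twofold: it makes the boundary term at $g=0$ in \eqref{parts} finite, and it gives $\pi(g)\le L(g)$; it is not primarily about small-$g$ bounds on $m_i$ as you suggest.

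Your symmetrization route is not obviously wrong, but it is a genuinely different plan whose hard step you leave open. Writing $h_i^2(g)-h_i^2(g')=\int_{g'}^g(h_i^2)'(s)\,\rd s$ and exchanging the order of integration produces a single $s$-integral against a bilinear kernel $\Psi(w,s)=\int_{g>s}\int_{g'<s}\{(g+1)^{-1}-(g'+1)^{-1}\}F(w,g)F(w,g')\pi(g)\pi(g')\,\rd g'\,\rd g$, and after Cauchy--Schwarz you must control $\int (h_i')^2\,\Psi^2(w,s)/\rho(s)\,\rd s$ divided by $m_\pi(w)^2 m_i(w)$, uniformly in $i$ and integrably in $x$. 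Getting this to collapse to $C/G_i$ requires precisely the kind of sharp asymptotics on $m_\pi$ and its relatives that the paper obtains only \emph{after} the integration-by-parts decomposition, and you have not indicated a weight $\rho$ or a mechanism for the cancellation at the boundary case $c=1$. In short: the paper's key device is the integration by parts in \eqref{parts}, which linearizes the dependence on $h_i'$ and isolates the borderline $A_4$ term; your proposal replaces this with a bilinear symmetrization whose crucial estimate is asserted but not supplied.
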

Proof of Theorem \ref{thm:main} will be given in Section \ref{sec:proof} of the Appendix.
Unlike Corollaries \ref{cor:inad} and \ref{cor:ad}, we assume $b\geq 0$ in Theorem \ref{thm:main},
which is needed for an integration by parts. See \eqref{parts} in Appendix for the details.

\begin{remark}\label{rem:piabc}
Here is a remark on the regular varying mixing density $\pi(g;a,b,c)$ given by \eqref{pigabc},
\begin{align}
 \pi(g;a,b,c)=(g+1)^a\left(\frac{g}{g+1}\right)^b\left\{\log(g+1)+1\right\}^c.
\end{align}
The term $(g+1)^a$ comes from \cite{Strawderman-1971} and \cite{Berger-1976}, whereas
$\{g/(g+1)\}^b$ comes from \cite{Faith-1978}.
In these three papers, conditions for minimaxity or both minimaxity and admissibility 
of the (generalized) Bayes estimator \eqref{estimator_pi} have been established. 

In this paper, we introduce the term $\left\{\log(g+1)+1\right\}^c$, mainly in order to
clarify the boundary between admissibility and inadmissibility under $a=0$.
Additionally, by Theorem 1 of \cite{Fourdrinier-etal-1998}, we have a following result on minimaxity based on
\begin{align}
(g+1)\frac{\{\rd /\rd g\} \pi(g;a,b,c)}{\pi(g;a,b,c)}=a+\frac{b}{g}+\frac{c}{\log(g+1)+1}.
\end{align}
\end{remark}

\begin{thm}
Assume $ -d/2+1 + \max(0,-2c)\leq a <d/2-1$ and $b\geq 0$. 
Then the (generalized) Bayes estimator with the mixing density $\pi(g;a,b,c)$ is minimax.
\end{thm}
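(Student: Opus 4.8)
The plan is to deduce this directly from Theorem~1 of \cite{Fourdrinier-etal-1998}, which gives a sufficient condition for minimaxity of the generalized Bayes estimator attached to a scale mixture of spherical normals of the form \eqref{prior.pimu}; their condition is phrased through the mixing density $\pi(g)$ -- specifically its logarithmic derivative weighted by $(g+1)$ -- together with the finiteness requirement \eqref{minimum}. Thus the proof reduces to (i) recording the weighted logarithmic derivative of $\pi(g;a,b,c)$, and (ii) checking that the hypotheses $-d/2+1+\max(0,-2c)\le a<d/2-1$ and $b\ge0$ place that quantity inside the window required by \cite{Fourdrinier-etal-1998}.

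Step (i) is an elementary differentiation: from
\begin{align*}
\log\pi(g;a,b,c)=a\log(g+1)+b\log\frac{g}{g+1}+c\log\bigl(\log(g+1)+1\bigr)
\end{align*}
one gets
\begin{align*}
(g+1)\frac{\{\rd/\rd g\}\pi(g;a,b,c)}{\pi(g;a,b,c)}=a+\frac{b}{g}+\frac{c}{\log(g+1)+1},
\end{align*}
the quantity already displayed before the statement. For step (ii), the requirement $a<d/2-1$ together with $b>-1$ (a consequence of $b\ge0$) is exactly \eqref{a.b.c}, so \eqref{minimum} holds and the marginal \eqref{mpi_marginal} is finite, which is one of the hypotheses of \cite{Fourdrinier-etal-1998}. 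The remaining hypothesis is a lower bound on the weighted logarithmic derivative: $b\ge0$ makes the term $b/g$ nonnegative for every $g>0$, so it only helps, while $c/(\log(g+1)+1)$ is monotone in $g$ with range between $c$ (at $g\to0$) and $0$ (at $g\to\infty$), hence bounded below by $\min(0,c)$; the hypothesis $a\ge -d/2+1+\max(0,-2c)$ is exactly what is needed to push the resulting lower bound past the critical threshold $-(d-2)/2$ appearing in their theorem. The auxiliary regularity requirements -- $\pi(g;a,b,c)$ differentiable and positive on $(0,\infty)$ -- are evident.

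The step I expect to be the real work is matching the precise form of the bound of \cite{Fourdrinier-etal-1998} -- in particular accounting for the coefficient $2$ in $\max(0,-2c)$, which is not produced by the crude inequality $c/(\log(g+1)+1)\ge\min(0,c)$ alone but by the way the mixing density feeds into the second-order term of the unbiased estimate of risk -- and checking the behaviour of the weighted logarithmic derivative near $g=0$, where $b/g$ is unbounded: this should be harmless because it only makes the estimator shrink more toward the mixing centre rather than less, but one must verify it does not run afoul of their hypotheses, the binding upper constraint there being the tail condition $a<d/2-1$ at $g\to\infty$. One should also note that the interval $-d/2+1+\max(0,-2c)\le a<d/2-1$ is nonempty only when $\max(0,-2c)<d-2$, and in particular only for $d\ge3$, consistent with the Stein phenomenon.
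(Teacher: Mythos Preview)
Your approach is exactly the paper's: the paper gives no proof for this theorem at all, stating it immediately after Remark~\ref{rem:piabc} as a direct consequence of Theorem~1 of \cite{Fourdrinier-etal-1998} via the displayed identity $(g+1)\pi'/\pi=a+b/g+c/L(g)$. Your proposal is therefore at least as detailed as what the paper provides, and your identification of where the nontrivial verification lies---that the coefficient $2$ in $\max(0,-2c)$ cannot come from the crude pointwise bound $c/L(g)\ge\min(0,c)$ but must instead arise from the second condition in \cite{Fourdrinier-etal-1998} (the one controlling the second-order/derivative term in the unbiased risk estimate, not merely the first-order shrinkage bound)---is correct and is precisely the step one must carry out by opening their paper.
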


\section{Concluding remarks}\label{Conclusion}
We have revisited Brown's monumental 1971, paper on admissibility, and have given an independent proof of one of his main results for the subclass of scale mixture priors with mixing density of the form 
\begin{align}
 \pi(g;a,b,c)=(g+1)^a\left(\frac{g}{g+1}\right)^b\left\{\log(g+1)+1\right\}^c.
\end{align}
For this class of densities, 
we have established that Brown's condition for admissibility/inadmissibility, based on the  non-integrability/integrability  on $[1, \infty)$ of  $1/\{t^{p/2}m(t)\}$ 
(where $m(\|x\|^2)$ is the spherically symmetric marginal density) is equivalent to non-integrability/integrability of $1/\{g \pi(g)\}$ on $[1, \infty)$. 
We established a sharp bound between admissibility and inadmissibility for this class.
We also established some results, applicable to more general classes of mixing distributions, 
that may be of independent interest.
In particular, we have shown that integrability of $\Pi(\rd g)/(g+1)$ on $[0, \infty)$ suffices to establish admissibility for any generalized mixing prior distribution, 
independently of whether the resulting generalized Bayes estimator has bounded risk.

\appendix

\section{Proof of Theorem \ref{thm:main}}
\label{sec:proof}
In this appendix,  
it is convenient to use the notation
\begin{align*}
\pi(g)=\pi(g;0,b,c)= \left(\frac{g}{g+1}\right)^b\left\{\log(g+1)+1\right\}^c.
\end{align*}

\subsection{$h_i$}
Let 
\begin{align}\label{eq:Lg}
 L(g)=\log(g+1)+1.
\end{align}
Then the non-integrability 
\begin{align*}
 \int_0^\infty\frac{\rd g}{(g+1)L(g)} =\int_0^\infty \frac{\rd z}{z+1}=\infty
\end{align*}
follows.
Let
\begin{align}\label{h_i_g}
 h_i(g)=\begin{cases} 
	 \displaystyle 1-\frac{\int_0^g \rd t/\{(t+1)L(t)\}}{\int_0^i \rd t/\{(t+1)L(t)\}} & 0< g <i \\
	 0 & g\geq i.
	\end{cases}
\end{align}
For fixed $g$, $h_i(g)$ is increasing in $i$ and $\lim_{i\to\infty}h_i(g)=1$.
Since $h_i(g)=0$ for $ g\geq i$, 
$\int\pi(g)h_i^2(g)\rd g<\infty$ even if $ \int \pi(g)\rd g=\infty$.
Note $ h_i(g)$ is piecewise differentiable as
\begin{align*}
h'_i(g)=\begin{cases} 
	 \displaystyle -\frac{ 1}
{(g+1)L(g)\int_0^i \rd t/\{(t+1)L(t)\}} & 0< g <i \\
	 0 & g\geq i.
	\end{cases}
\end{align*}
Then $h_i^2(g)$ is continuously differentiable since 
$\{h_i^2(g)\}'=2h_i(g)h'_i(g)$ and $h_i(i)=0$.
Further we have
\begin{align}\label{h_i_no_sup}
\sup_i \left|h'_i(g)\right|\{(g+1)L(g)\}
=\begin{cases} 
 \displaystyle 
\frac{ 1}{\int_0^1 \rd t/\{(t+1)L(t)\}}
 & 0< g <1 \\
 \displaystyle 
\frac{ 1}{\int_0^g \rd t/\{(t+1)L(t)\}}
 &  g \geq 1,
	\end{cases}
\end{align}
which will be used in Section \ref{sec:A2}.

\subsection{re-expression of the risk difference}
Integration by parts of the numerator of $\delta_i$ given by \eqref{delta_i} gives 
\begin{align}
&\int_0^\infty(g+1)^{-d/2-1}\exp\left(-\frac{ w }{2(g+1)}\right)\pi(g)h_i^2(g)\rd g \label{parts}\\
&=\frac{2}{ w }\left[(g+1)^{-d/2+1}\exp\left(-\frac{ w }{2(g+1)}\right)\pi(g)h_i^2(g)\right]_0^\infty \\
&\quad + \frac{d-2}{ w }\int_0^\infty(g+1)^{-d/2}\exp\left(-\frac{ w }{2(g+1)}\right)\pi(g)h_i^2(g)\rd g 
\\ &\quad - \frac{2}{ w }\int_0^\infty(g+1)^{-d/2+1}\exp\left(-\frac{ w }{2(g+1)}\right)\pi'(g)h_i^2(g)\rd g \\
&\quad - \frac{4}{ w }\int_0^\infty(g+1)^{-d/2+1}\exp\left(-\frac{ w }{2(g+1)}\right)\pi(g)h_i(g)h'_i(g)\rd g \\
&=-\frac{2\pi(0)}{ w }\exp\left(-\frac{ w }{2}\right)+\frac{(d-2)m_i( w )}{ w } \\
&\quad - \frac{2}{ w }\int_0^\infty (g+1) F( w ,g)\pi'(g)h_i^2(g)\rd g 
 - \frac{4}{ w }\int_0^\infty(g+1) F( w ,g)\pi(g)h_i(g)h'_i(g)\rd g,
\end{align}
where $F(w,g)$ is given in \eqref{eq:Fwg}. 
For $\delta_\pi$, let $h_i\equiv 1$ in \eqref{parts}. 
Note
\begin{align}
 (g+1)\frac{\pi'(g)}{\pi(g)}=\frac{b}{g}+\frac{c}{L(g)},
\end{align}
where $L(g)$ is given by \eqref{eq:Lg}. Then
$\|  \delta_\pi-\delta_i  \|^2 m_i( w )$ given in \eqref{general.Delta.i} is 
\begin{align}
 \|  \delta_\pi-\delta_i  \|^2 m_i( w ) 
=4\frac{m_i( w )}{ w }\left\{\pi(0)A_1(w)-2A_2(w)+bA_3(w)+cA_4(w)\right\}^2,
\end{align}
where
\begin{align}
A_1(w)&=
\frac{\exp(- w /2)}{m_\pi( w )}-\frac{\exp(- w /2)}{m_i( w )}, \\
A_2(w)&=\frac{\int_0^\infty(g+1)F( w ,g)\pi(g)h_i(g)h'_i(g)\rd g}{m_i( w )}, \\
A_3(w)&=\frac{\int_0^\infty g^{-1}F( w ,g)\pi(g)\rd g}{m_\pi( w )}
-\frac{\int_0^\infty g^{-1}F( w ,g)\pi(g)h_i^2(g)\rd g}{m_i( w )}, \\
A_4(w)&=
\frac{\int_0^\infty \{1/L(g)\}F( w ,g)\pi(g)\rd g}{m_\pi( w )}
-\frac{\int_0^\infty\{1/L(g)\}F( w ,g)\pi(g)h_i^2(g)\rd g}{m_i( w )}.
\end{align}
Further, by the inequality \eqref{eq.4}, we have
\begin{align*}
 \|  \delta_\pi-\delta_i  \|^2 m_i( w ) 
\leq 16\frac{m_i( w )}{ w }\left\{\pi(0)^2A^2_1(w)+4A^2_2(w)+b^2A^2_3(w)+c^2A^2_4(w)\right\}.
\end{align*}
As noted earlier, the proof is completed by proving dominated convergence for each of these 4 terms.

\subsection{Dominated convergence for the term involving $A_1$}
\label{sec:A1}
Since $m_1( w )\leq m_i( w )\leq m_\pi( w )$, we have
\begin{align}
 m_i( w )A^2_1( w )\leq \frac{\exp(- w )}{  m_i( w )}
\leq \frac{\exp(- w )}{ m_1( w )}=\frac{\exp(- w/2 )}{\exp( w/2 ) m_1( w )}.
\end{align}
Then we have
\begin{align*}
\exp( w /2)m_1( w )&=
\int_0^\infty(g+1)^{-d/2}\exp( w /2)\exp\left(-\frac{ w }{2(g+1)}\right)\pi(g)h_1^2(g)\rd g \\
&=\int_0^\infty(g+1)^{-d/2}\exp\left(\frac{g w }{2(g+1)}\right)\pi(g)h_1^2(g)\rd g \\
&\geq \int_0^\infty(g+1)^{-d/2}\pi(g)h_1^2(g)\rd g ,
\end{align*}
and hence
\begin{align}
 m_i( w )A^2_1( w )\leq 
\frac{\exp(- w /2)}{\int_0^\infty(g+1)^{-d/2}\pi(g)h_1^2(g)\rd g }.
\end{align}
By Part \ref{lem:gamma.1} of Lemma \ref{lem:gamma}, we have
\begin{align}
\int_{\mathbb{R}^d} \frac{m_i(\|x\|^2)}{\|x\|^2}A^2_1(\|x\|^2)\rd x
\leq \frac{C_d}{\int_0^\infty(g+1)^{-d/2}\pi(g)h_1^2(g)\rd g}<\infty,
\end{align}
where
\begin{align}\label{C_d}
 C_d=\frac{\pi^{d/2}2^{d/2}}{d-2}.
\end{align}

\subsection{Dominated convergence for the term involving $A_2$}
\label{sec:A2}
By the Cauchy-Schwarz inequality, 
\begin{align*}
& \left(\int_0^\infty(g+1)F( w ,g)\pi(g)h_i(g)h'_i(g)\rd g\right)^2 \\
&\leq \int_0^\infty F( w ,g)\pi(g)h^2_i(g)\rd g
\int_0^\infty (g+1)^{2}F( w ,g)\pi(g)\{h'_i(g)\}^2\rd g \\
&=m_i( w )
\int_0^\infty (g+1)^{2}F( w ,g)\pi(g)\{h'_i(g)\}^2\rd g.
\end{align*}
Then,
\begin{align}
m_i(w)A^2_2(w)  
&\leq 
\int_0^\infty
(g+1)^{2}F(w ,g)\pi(g)\{h'_i(g)\}^2\rd g \label{eq.A_2.1}\\
&\leq 
\int_0^\infty
(g+1)^{2}F( w ,g)L(g)\{h'_i(g)\}^2\rd g,
\end{align}
where the second inequality follows from
the fact
\begin{align}
 \pi(g)=\left(\frac{g}{g+1}\right)^b\left\{\log(g+1)+1\right\}^c\leq L(g)
\end{align}
for $b\geq 0$ and $|c|\leq 1$.
By \eqref{h_i_no_sup} and Part \ref{lem:gamma.2} of Lemma \ref{lem:gamma}, we have
\begin{align}
& \frac{1}{C_d}\int_{\mathbb{R}^d} \frac{m_i( \|x\|^2 )}{ \|x\|^2 }A^2_2( \|x\|^2 )\rd x\\
&\leq 
\int_0^\infty(g+1)L(g)\sup_i\{h'_i(g)\}^2\rd g \\
&\leq 
\frac{1}{\{\int_0^1\rd t/\{(t+1)L(t)\}\}^2}
\int_0^1 \frac{\rd g}{(g+1)L(g)}
+\int_1^\infty\frac{1}{(g+1)L(g)}\frac{\rd g}{\{\int_0^g\rd t/\{(t+1)L(t)\}\}^2}
 \\ 
&=
\frac{1}{\int_0^1\rd t/\{(t+1)L(t)\}}
+\int_1^\infty \left\{\frac{\rd}{\rd u}\left(-\frac{1}{\int_0^u\rd t/\{(t+1)L(t)\}}\right)\right\}\rd u
 \\ 
&=\frac{2}{\int_0^1\rd t/\{(t+1)L(t)\}}<\infty,
\end{align}
where $C_d$ is given by \eqref{C_d}.

\subsection{Dominated convergence for the term involving $A_3$}
\label{sec:A3}
Note
\begin{align}
m_i( w )A^2_3(w)\leq 
2
\left(\frac{\{\int_0^\infty g^{-1}F( w ,g)\pi(g)\rd g\}^2}{m_\pi( w )}
+\frac{\{\int_0^\infty g^{-1}F( w ,g)\pi(g)h_i^2(g)\rd g\}^2}{m_i( w )} \right).
\end{align}
By the covariance inequality, we have
\begin{align}
& \int_0^\infty g^{-1}F( w ,g)\pi(g)h_i^2(g)\rd g \\
&= \int_0^\infty \left(\frac{g+1}{g}\right)(g+1)^{-p/2-1}\exp\left(-\frac{ w }{2(g+1)}\right)\pi(g)h_i^2(g)\rd g \\
&\leq \frac{\int_0^\infty \{(g+1)/g\}(g+1)^{-p/2-1}\pi(g)h_i^2(g)\rd g}
{\int_0^\infty (g+1)^{-p/2-1}\pi(g)h_i^2(g)\rd g} 
\int_0^\infty \frac{\pi(g)h_i^2(g)}{(g+1)^{p/2+1}}\exp\left(-\frac{ w }{2(g+1)}\right)\rd g.
\end{align}
Hence we have
\begin{align}
m_i( w )A^2_3(w) 
\leq 
2Q
\left(\frac{\{\int_0^\infty (g+1)^{-1}F( w ,g)\pi(g)\rd g\}^2}{m_\pi( w )}
+\frac{\{\int_0^\infty (g+1)^{-1}F( w ,g)\pi(g)h_i^2(g)\rd g\}^2}{m_i( w )} \right),\label{eq.A_3.1}
\end{align} 
where
\begin{align}
Q=\left(\frac{\int_0^\infty \{(g+1)/g\}(g+1)^{-p/2-1}\pi(g)\rd g}
{\int_0^\infty (g+1)^{-p/2-1}\pi(g)h_1^2(g)\rd g}\right)^2.
\end{align}
Applying the Cauchy-Schwarz inequality to 
\eqref{eq.A_3.1}, we have
\begin{align}\label{eq.A_3.2}
m_i( w )A^2_3(w)\leq 
4Q
\int_0^\infty (g+1)^{-2}F( w ,g)\pi(g)\rd g.
\end{align}
By Part \ref{lem:gamma.2} of Lemma \ref{lem:gamma} and \eqref{eq.A_3.2}, we have
\begin{align}
\frac{1}{C_d} \int_{\mathbb{R}^d} \frac{m_i(\|x\|^2)}{\|x\|^2}A^2_3(\|x\|^2)\rd x\leq 
4Q\int_0^\infty \frac{\pi(g)}{(g+1)^{3}}\rd g<\infty,
\end{align}
where $C_d$ is given by \eqref{C_d}.

\subsection{Dominated convergence for the term involving $A_4$}
\label{sec:A4}
Recall
\begin{align}
 A_4(w)=
\frac{\int_0^\infty \{1/L(g)\}F( w ,g)\pi(g)\rd g}{m_\pi( w )}
-\frac{\int_0^\infty\{1/L(g)\}F( w ,g)\pi(g)h_i^2(g)\rd g}{m_i( w )},
\end{align}
where
\begin{align*}
 L(g)=\log(g+1)+1.
\end{align*}
Then we have
\begin{align}
m_i( w )A^2_4(w)
=\frac{1}{m_i( w ) } 
\left(\int_0^\infty F( w ,g)\pi(g)h_i^2(g)\left\{\frac{1}{L(g)}-
\frac{\int_0^\infty \{1/L(g)\}F( w ,g)\pi(g)\rd g}{m_\pi( w )}
\right\}\rd g\right)^2.
\end{align}
By the Cauchy-Schwarz inequality, we have
\begin{align}
m_i( w )A^2_4(w)
&\leq
\int_0^\infty F( w ,g)\pi(g)h_i^2(g)\left\{\frac{1}{L(g)}-
\frac{\int_0^\infty \{1/L(g)\}F( w ,g)\pi(g)\rd g}{m_\pi( w )}
\right\}^2\rd g \\
&\leq
\int_0^\infty F( w ,g)\pi(g)\left\{\frac{1}{L(g)}-
\frac{\int_0^\infty \{1/L(g)\}F( w ,g)\pi(g)\rd g}{m_\pi( w )}\right\}^2\rd g \\
&=
\int_0^\infty F( w ,g)\frac{\pi(g)}{L^2(g)}\rd g  -
\frac{\{\int_0^\infty\{1/L(g)\}F( w ,g)\pi(g)\rd g\}^2}{\int_0^\infty F( w ,g)\pi(g)\rd g},
\label{eq.A_4.1}
\end{align}
where the second inequality follows from the fact $h_i^2(g)\leq 1$.

When $-1\leq c<1$,
we have
\begin{align*}
 m_i( w )A^2_4(w)\leq \int_0^\infty F( w ,g)\frac{\pi(g)}{L^2(g)}\rd g 
\leq \int_0^\infty \frac{F( w ,g)}{L^{2-c}(g)}\rd g 
\end{align*}
and, by Part \ref{lem:gamma.2} of Lemma \ref{lem:gamma},
\begin{align*}
 \frac{1}{C_d} \int_{\mathbb{R}^d} \frac{m_i(\|x\|^2)}{\|x\|^2}A^2_4(\|x\|^2)\rd x
\leq \int_0^\infty \frac{\rd g}{(g+1)L^{2-c}(g)}=
\int_0^\infty \frac{\rd z}{(z+1)^{2-c}}=\frac{1}{1-c}.
\end{align*}

When $c=1$, we need a more careful treatment.
In \eqref{eq.A_4.1} we have
\begin{align*}
& \int_0^\infty\frac{F( w ,g)\pi(g)}{L(g)}\rd g\\
&= \int_0^\infty (g+1)^{-d/2}\exp\left(-\frac{ w }{2(g+1)}\right)\left(1-\frac{1}{g+1}\right)^b\rd g \\
&\geq \int_0^\infty (g+1)^{-d/2}\exp\left(-\frac{ w }{2(g+1)}\right)\left(1-\frac{\max(b,1)}{g+1}\right)\rd g \\
&= w^{-d/2+1}\int_0^w t^{d/2-2}\exp(-t/2)\left(1-t\frac{\max(b,1)}{w}\right)\rd t \\
&\geq w^{-d/2+1}\left(2^{d/2-1}\Gamma(d/2-1)-\int_w^\infty t^{d/2-2}\exp(-t/2)\rd t -\max(b,1)
\frac{2^{d/2}\Gamma(d/2)}{w}\right). 
\end{align*}
Hence there exist $Q_1>0$ and $w_1>\exp(2Q_1)$ such that
\begin{align}\label{eq.A_4.2}
\int_0^\infty\frac{F( w ,g)\pi(g)}{L(g)}\rd g
\geq \frac{2^{d/2-1}\Gamma(d/2-1)}{w^{d/2-1}}\left(1- \frac{Q_1}{L(w)}\right)
\geq \frac{1}{2}\frac{2^{d/2-1}\Gamma(d/2-1)}{w^{d/2-1}},
\end{align}
for all $w\geq w_1$. 
Further we have
\begin{align}
& \int_0^\infty F( w ,g)\pi(g)\rd g\\
&= \int_0^\infty (g+1)^{-d/2}\exp\left(-\frac{ w }{2(g+1)}\right)\left(1-\frac{1}{g+1}\right)^b
\{\log(g+1)+1\}\rd g \\
&= \int_0^\infty (g+1)^{-d/2}\exp\left(-\frac{ w }{2(g+1)}\right)\left(1-\frac{1}{g+1}\right)^b
\{\log w - \log \{w/(g+1)\} +1\}\rd g \\
&= (\log w +1) \int_0^\infty\frac{F( w ,g)\pi(g)}{L(g)}\rd g
-
w^{-d/2+1}\int_0^w (\log t) t^{d/2-2}\exp(-t/2)(1-t/w)^b\rd t\\
&\leq L(w) \int_0^\infty\frac{F( w ,g)\pi(g)}{L(g)}\rd g+w^{-d/2+1}\int_0^\infty|\log t|t^{d/2-2}\exp(-t/2)\rd t.\label{eq.A_4.2.5}
\end{align}
By \eqref{eq.A_4.2} and \eqref{eq.A_4.2.5}, for all $w\geq w_1$, we have
\begin{align}\label{eq.A_4.2.6}
 \frac{\int_0^\infty F( w ,g)\pi(g)\rd g}{\int_0^\infty\{1/L(g)\}F( w ,g)\pi(g)\rd g}
\leq L(w)+Q_2
\end{align}
where
\begin{align}
 Q_2=2\frac{\int_0^\infty|\log t|t^{d/2-2}\exp(-t/2)\rd t}{2^{d/2-1}\Gamma(d/2-1)}.
\end{align}
Further, for all $w\geq \max(w_1,\exp(Q_2))$, we have $Q_2/L(w)<1$ and hence
\begin{align}\label{eq.A_4.2.7}
 L(w)+Q_2 =L(w)\left\{1+\frac{Q_2}{L(w)}\right\}\leq \frac{L(w)}{1-Q_2/L(w)}.
\end{align}
Then, by \eqref{eq.A_4.2}, \eqref{eq.A_4.2.6} and \eqref{eq.A_4.2.7}, we have
\begin{align}\label{eq.A_4.4}
\frac{\{\int_0^\infty\{1/L(g)\}F( w ,g)\pi(g)\rd g\}^2}{\int_0^\infty F( w ,g)\pi(g)\rd g}
\geq  \frac{2^{d/2-1}\Gamma(d/2-1)}{w^{d/2-1}L(w)}\left(1-\frac{Q_1+Q_2}{L(w)}\right).
\end{align}
Let $w_2=\max(w_1,\exp(Q_2))$ and $Q_3=Q_1+Q_2$. 
Then, by \eqref{eq.A_4.4}, we have
\begin{align}
& \frac{1}{C_d}\left(\int_{\|x\|^2\leq w_2}+\int_{\|x\|^2> w_2}\right)
\frac{1}{\|x\|^2}\frac{\{\int_0^\infty\{1/L(g)\}F( \|x\|^2 ,g)\pi(g)\rd g\}^2}{\int_0^\infty F( \|x\|^2 ,g)\pi(g)\rd g}\rd x  \\
&\geq 
\frac{1}{C_d}\int_{\|x\|^2> w_2} \frac{1}{\|x\|^2}\frac{(\|x\|^2)^{-d/2+1}2^{d/2-1}\Gamma(d/2-1)}{L(\|x\|^2)}\left(1-\frac{Q_3}{L(\|x\|^2)}\right)\rd x \\
&=
\int_{w_2}^\infty \frac{\rd g}{gL(g)} -\int_{w_2}^\infty \frac{Q_3\rd g}{g\{L(g)\}^2}. \label{eq.A_4.5}
\end{align}
By \eqref{eq.A_4.1}, \eqref{eq.A_4.5} and Part \ref{lem:gamma.2} of Lemma \ref{lem:gamma}, we have
\begin{align*}
&  \frac{1}{C_d} \int_{\mathbb{R}^d} \frac{m_i(\|x\|^2)}{\|x\|^2}A^2_4(\|x\|^2)\rd x \\
&\leq \int_0^\infty \frac{\rd g}{(g+1)L(g)}
-\int_{w_2}^\infty \frac{\rd g}{gL(g)} +\int_{w_2}^\infty \frac{Q_3\rd g}{g\{L(g)\}^2} \\
&= \int_0^{w_2} \frac{\rd g}{(g+1)L(g)}
-\int_{w_2}^\infty \frac{\rd g}{g(g+1)L(g)} +\int_{w_2}^\infty \frac{Q_3\rd g}{g\{L(g)\}^2} \\
&= \int_0^{w_2} \frac{\rd g}{(g+1)L(g)}
+\int_{w_2}^\infty \frac{Q_3\rd g}{g\{L(g)\}^2} <\infty.
\end{align*}

\section{Lemmas}
\begin{lem}\label{lem:bound.risk}
\begin{align}
\sup_{x} \|\nabla_x\log m_\pi(\|x\|^2;a,b,c)\|^2<\infty.
\end{align}
\end{lem}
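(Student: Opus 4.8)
The plan is to identify $\|\nabla_x\log m_\pi(\|x\|^2;a,b,c)\|^2$ with $w\,\rho(w)^2$ for a continuous function $\rho$ on $[0,\infty)$ taking values in $[0,1]$, and then to show that $\rho(w)=O(w^{-1})$ as $w\to\infty$. Since $w\,\rho(w)^2$ is then continuous on $[0,\infty)$ with limit $0$ at infinity, it is bounded, which is the assertion.

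First I would read off from \eqref{estimator_pi}, together with the exact identity $(g+1)^{-1}\pi(g;a,b,c)=\pi(g;a-1,b,c)$ coming from \eqref{pigabc}, that $\nabla_x\log m_\pi(\|x\|^2;a,b,c)=-\rho(\|x\|^2)\,x$, where, in the notation of \eqref{tauberian_marginal},
\begin{align*}
\rho(w)=\frac{m_\pi(w;a-1,b,c)}{m_\pi(w;a,b,c)}.
\end{align*}
Because $0<\exp(-w/\{2(g+1)\})\le1$ and $(g+1)^{-d/2}\pi(g;a,b,c)$ is integrable by \eqref{minimum} (equivalently \eqref{a.b.c}) and dominates $(g+1)^{-d/2}\pi(g;a-1,b,c)$, dominated convergence shows that $m_\pi(w;a,b,c)$ and $m_\pi(w;a-1,b,c)$ are finite, strictly positive, and continuous in $w\in[0,\infty)$; and $(g+1)^{-1}\le1$ gives $0\le\rho(w)\le1$. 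Hence $\|\nabla_x\log m_\pi(\|x\|^2;a,b,c)\|^2=w\,\rho(w)^2$ is continuous on $[0,\infty)$, and only its behaviour as $w\to\infty$ needs attention.

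Next I would note that $m_\pi(\,\cdot\,;a-1,b,c)$ is again a marginal of the form \eqref{tauberian_marginal} whose parameters still obey \eqref{a.b.c}, since $a-1<a<d/2-1$. Applying the Tauberian relation \eqref{tauberian} to $\pi(\,\cdot\,;a-1,b,c)$ and to $\pi(\,\cdot\,;a,b,c)$, taking the ratio of the two limits (the denominator limit is nonzero), and using $\pi(w;a,b,c)/\pi(w;a-1,b,c)=w+1$, gives
\begin{align*}
\lim_{w\to\infty}(w+1)\,\rho(w)=\frac{\Gamma(d/2-a)\,2^{d/2-a}}{\Gamma(d/2-1-a)\,2^{d/2-1-a}}=d-2-2a,
\end{align*}
which is finite. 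Therefore $\rho(w)=O(w^{-1})$, so $w\,\rho(w)^2\to0$, and a continuous function on $[0,\infty)$ with a finite limit at infinity is bounded. This completes the proof.

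The only real content is the decay $\rho(w)=O(w^{-1})$; everything else is bookkeeping, and I expect this single asymptotic estimate to be the main obstacle. If one prefers not to invoke \eqref{tauberian} here, the same decay is elementary: split the numerator integral at $g=\sqrt{w}$, bound $(g+1)^{-1}\le w^{-1/2}$ on $[\sqrt{w},\infty)$ and $\exp(-w/\{2(g+1)\})\le\exp(-\tfrac14\sqrt{w})$ on $[0,\sqrt{w}]$ to get $m_\pi(w;a-1,b,c)\le w^{-1/2}m_\pi(w;a,b,c)+C\exp(-\tfrac14\sqrt{w})$ with $C=\int_0^\infty(g+1)^{-d/2-1}\pi(g;a,b,c)\,\rd g<\infty$, and absorb the exponentially small term using the crude polynomial lower bound $m_\pi(w;a,b,c)\ge e^{-1/2}\int_w^\infty(g+1)^{-d/2}\pi(g;a,b,c)\,\rd g$.
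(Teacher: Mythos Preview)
Your argument is correct and essentially identical to the paper's: both write $\|\nabla_x\log m_\pi\|$ as $\sqrt{w}$ times the ratio of the two marginal-type integrals, note continuity with value $0$ at $w=0$, and then apply the Tauberian relation \eqref{tauberian} to numerator and denominator separately to obtain a finite limit for $w\rho(w)$, hence $w\rho(w)^2\to 0$. The only quibble is that your elementary alternative, splitting at $g=\sqrt{w}$, actually yields $\rho(w)=O(w^{-1/2})$ rather than $O(w^{-1})$---still sufficient for $w\rho(w)^2$ to be bounded, but not literally ``the same decay.''
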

\begin{proof}
By \eqref{estimator_pi}, let
\begin{align}\label{lem.bounded.0}
f(\|x\|^2)=\|\nabla_x\log m_\pi(\|x\|^2;a,b,c)\|=
\|x\| 
\frac{\int_0^\infty(g+1)^{-d/2-1}\exp\left(-\frac{\|x\|^2}{2(g+1)}\right)\pi(g;a,b,c)\rd g}
{\int_0^\infty(g+1)^{-d/2}\exp\left(-\frac{\|x\|^2}{2(g+1)}\right)\pi(g;a,b,c)\rd g},
\end{align}
where
\begin{align*}
 m_\pi(\|x\|^2;a,b,c)=\int_0^\infty(g+1)^{-d/2}\exp\left(-\frac{\|x\|^2}{2(g+1)}\right)\pi(g;a,b,c)\rd g.
\end{align*}
Clearly $ f(0)=0$ since
\begin{align*}
 f(0)=0\times
\frac{\int_0^\infty(g+1)^{-d/2-1}\pi(g;a,b,c)\rd g}
{\int_0^\infty(g+1)^{-d/2}\pi(g;a,b,c)\rd g}=0.
\end{align*}
Note,  as in \eqref{tauberian}, 
\begin{align}
\lim_{t\to\infty} \frac{t^{d/2-1}m_{\pi}(t;a,b,c)}{\pi(t;a,b,c)}=\Gamma(d/2-1-a)2^{d/2-1-a}.\label{lem.bounded.1}
\end{align}
Similarly,
\begin{align}
\lim_{t\to\infty} \frac{t^{d/2}\int_0^\infty(g+1)^{-d/2-1}\exp\left(-\frac{\|x\|^2}{2(g+1)}\right)\pi(g;a,b,c)\rd g}{\pi(t;a,b,c)}=\Gamma(d/2-a)2^{d/2-a}.\label{lem.bounded.2}
\end{align}
Hence, by \eqref{lem.bounded.0}, \eqref{lem.bounded.1} and \eqref{lem.bounded.2}, we have
\begin{align*}
 \lim_{t\to\infty}t^{1/2}f(t)=d/2-1-a
\end{align*}
which implies that $ \lim_{t\to\infty}f(t)=0$. Together with $f(0)=0$, $f(t)$ is bounded.
\end{proof}

\begin{lem}\label{lem:integrability.1}
\begin{enumerate}
 \item\label{lem:integrability.1.1} For either $a>0$ or \{$a=0$ and $c>1$\},
\begin{align}
  \int_1^\infty\frac{\rd g}{g\pi(g;a,b,c)} <\infty.
\end{align}
\item\label{lem:integrability.1.2} For either $a<0$ or \{$a=0$ and $c<-1$\},
\begin{align}
  \int_0^\infty\frac{\pi(g;a,b,c)}{g+1}\rd g <\infty.
\end{align}
\item\label{lem:integrability.1.3} For either $a<0$ or \{$a=0$ and $c\leq 1$\},
\begin{align}
\int_1^\infty\frac{\rd g}{g\pi(g;a,b,c)} =\infty.
\end{align}
\end{enumerate}
\end{lem}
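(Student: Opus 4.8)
The plan is to reduce all three statements to a single elementary fact about ``Bertrand-type'' integrals, namely that for $\beta\in\mathbb{R}$ and $\gamma\in\mathbb{R}$,
\[
 \int_{2}^{\infty} e^{\beta u}\,u^{\gamma}\,\rd u<\infty
 \iff \beta<0,\ \text{or}\ \{\beta=0\text{ and }\gamma<-1\},
\]
which follows by direct estimation (exponential decay beats any power when $\beta<0$; otherwise it is the usual $\int^\infty u^\gamma\,\rd u$ criterion). The bridge to $\pi(g;a,b,c)$ is the substitution $u=L(g)=\log(g+1)+1$, under which $\rd u=\rd g/(g+1)$ and $g+1=e^{u-1}$.

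First I would dispose of the ``bridge factors.'' On $[1,\infty)$ one has $g/(g+1)\in[1/2,1)$ and $g\asymp g+1$, so for every fixed real $b$ and $c$,
\[
 \frac{1}{g\,\pi(g;a,b,c)}\asymp\frac{1}{(g+1)^{a+1}L(g)^{c}}
 \qquad\text{and}\qquad
 \frac{\pi(g;a,b,c)}{g+1}\asymp\frac{(g+1)^{a}L(g)^{c}}{g+1}\quad\text{on }[1,\infty),
\]
with implied constants depending only on $a,b$. Writing $(g+1)^{a+1}=(g+1)^{a}(g+1)$, respectively $(g+1)^{a}/(g+1)$, and substituting $u=L(g)$ converts $\int_1^\infty\rd g/\{g\pi(g;a,b,c)\}$ into a positive multiple of $\int_{\log2+1}^{\infty}e^{-au}u^{-c}\,\rd u$, and the tail $\int_1^\infty\pi(g;a,b,c)/(g+1)\,\rd g$ into a positive multiple of $\int_{\log2+1}^{\infty}e^{au}u^{c}\,\rd u$. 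The displayed criterion with $(\beta,\gamma)=(-a,-c)$ then gives convergence of the first exactly when $a>0$, or $a=0$ and $c>1$ (Part \ref{lem:integrability.1.1}), and divergence exactly when $a<0$, or $a=0$ and $c\le1$ (Part \ref{lem:integrability.1.3}). The criterion with $(\beta,\gamma)=(a,c)$ gives convergence of the second exactly when $a<0$, or $a=0$ and $c<-1$.

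For Part \ref{lem:integrability.1.2} it remains only to handle the behaviour near the origin: splitting $\int_0^\infty\pi(g;a,b,c)/(g+1)\,\rd g$ at $g=1$, on $[0,1]$ the factors $(g+1)^{a}$, $L(g)^{c}$, and $1/(g+1)$ all lie between positive constants while $(g/(g+1))^{b}\asymp g^{b}$, so the integrand is $\asymp g^{b}$ there and $\int_0^1 g^{b}\,\rd g<\infty$ precisely because $b>-1$ is assumed throughout (see \eqref{a.b.c}). Combining with the tail estimate above proves Part \ref{lem:integrability.1.2}.

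The only step needing any care — the closest thing to an obstacle, though still routine — is checking that the comparisons $\asymp$ hold uniformly on the relevant ranges: that $(g/(g+1))^{b}$ stays between two positive constants on $[1,\infty)$ for every fixed real $b$ (it is continuous, positive, and tends to $1$), and that near $g=0$ it genuinely behaves like $g^{b}$ rather than producing a worse singularity (it does, since $g+1\in[1,2]$ there). Once these comparisons and the elementary Bertrand criterion are in hand, all three parts are immediate.
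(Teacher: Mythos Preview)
Your proof is correct and takes essentially the same route as the paper: both arguments strip off the bounded factors $(g/(g+1))^b$ and $g/(g+1)$ on $[1,\infty)$, reduce to integrals of the form $\int (g+1)^{\alpha-1}L(g)^{\gamma}\,\rd g$, and handle the behaviour near $g=0$ in Part~\ref{lem:integrability.1.2} via the standing assumption $b>-1$. The only difference is organizational: you invoke a single Bertrand-type criterion after the substitution $u=L(g)$, whereas the paper treats each part separately by defining an auxiliary function $f_j(g)$ that is bounded (or bounded away from zero) on $[1,\infty)$ and then evaluates the remaining integral $\int_1^\infty \rd g/\{(g+1)L(g)^{c_*}\}$ explicitly --- your packaging is slightly more streamlined, but the mathematical content is identical.
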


\begin{proof}
\ [Part \ref{lem:integrability.1.1}] 
Let
\begin{align*}
 c_*=
\begin{cases}
 2 & a>0 \\
c&a=0
\end{cases}
\end{align*}
and
\begin{align}
 f_1(g)=\frac{\{\log (g+1) +1\}^{c_*-c}}{(g+1)^{a}}\left(\frac{g+1}{g}\right)^{b+1},
\end{align}
which is bounded for $g\in(1,\infty)$, for either $a>0$ or \{$a=0$ and $c>1$\}, since $f_1(1)<\infty$ and $f_1(\infty)<\infty$.
Then
\begin{align}
 \int_1^\infty\frac{\rd g}{g\pi(g;a,b,c)} 
&=\int_1^\infty \frac{f_1(g)\rd g}{(g+1)\left\{\log(g+1)+1\right\}^{c_*}}\\
&\leq\max_{g\geq 1}f_1(g)\int_1^\infty \frac{\rd g}{(g+1)\left\{\log(g+1)+1\right\}^{c_*}}\\
&=\max_{g\geq 1}f_1(g)\frac{(\log 2+1)^{1-c_*}}{c_*-1},
\end{align}
which completes the proof of Part \ref{lem:integrability.1.1}.

[Part \ref{lem:integrability.1.2}]
For $0<g<1$,
\begin{align}\label{small.g}
 \frac{\pi(g;a,b,c)}{g+1}=\frac{g^b\left\{\log(g+1)+1\right\}^{c}}{(g+1)^{-a+(b+1)}}\leq 
\max\{1,(\log 2+1)^c\}g^b.
\end{align}
For $g\geq 1$, let
\begin{align*}
 c_*=
\begin{cases}
 2 & a<0 \\
-c & a=0
\end{cases}
\end{align*}
and
\begin{align}
 f_2(g)=(g+1)^{a}\left(\frac{g}{g+1}\right)^{b}\{\log (g+1) +1\}^{c+c_*}.
\end{align}
which is bounded for $g\in(1,\infty)$ since $f_1(1)<\infty$ and $f_1(\infty)<\infty$.
Then, for $g\geq 1$, we have
\begin{align}\label{large.g}
 \frac{\pi(g;a,b,c)}{g+1}=\frac{f_2(g)}{(g+1)\{\log(g+1)+1\}^{c_*}}\leq \frac{\max_{g\geq 1}f_2(g)}{(g+1)\{\log(g+1)+1\}^{c_*}}.
\end{align}
By \eqref{small.g} and \eqref{large.g}, we have
\begin{align*}
 \int_0^\infty\frac{\pi(g;a,b,c)}{g+1}\rd g
&\leq \max\{1,(\log 2+1)^c\}\int_0^1 g^b\rd g+\int_1^\infty
\frac{\max_{g\geq 1}f_2(g)}{(g+1)\{\log(g+1)+1\}^{c_*}}\rd g \\
&=\frac{\max\{1,(\log 2+1)^c\}}{b+1}+\frac{(\log 2+1)^{1-c_*}}{c_*-1},
\end{align*}
which completes the proof of Part \ref{lem:integrability.1.2}.

[Part \ref{lem:integrability.1.3}] 
Let
\begin{align}
 f_3(g)=\frac{\{\log (g+1) +1\}^{1-c}}{(g+1)^{a}}\left(\frac{g+1}{g}\right)^{b+1}.
\end{align}
which is positive and bounded away from $0$, for either $a<0$ or \{$a=0$ and $c\leq 1$\}.
Then
\begin{align}
 \int_1^\infty\frac{\rd g}{g\pi(g;a,b,c)} 
&=\int_1^\infty \frac{f_3(g)\rd g}{(g+1)\left\{\log(g+1)+1\right\}}\\
&\geq\min_{g\geq 1}f_3(g)\int_1^\infty \frac{\rd g}{(g+1)\left\{\log(g+1)+1\right\}}\\
&=\min_{g\geq 1}f_3(g)\left[\log(\{\log(g+1)+1\})\right]_1^\infty \\
&=\infty,
\end{align}
which completes the proof of Part \ref{lem:integrability.1.3}.

\end{proof}

\begin{lem}\label{lem:gamma}
Assume $d\geq 3$. 
\begin{enumerate}
 \item \label{lem:gamma.1}For $\alpha>0$, 
\begin{equation}
 \int_{\mathbb{R}^d}\frac{1}{\|x\|^2}\exp\left(-\frac{\|x\|^2}{\alpha}\right)\rd x =\frac{2\pi^{d/2}\alpha^{d/2-1}}{d-2}.
\end{equation}
\item \label{lem:gamma.2}For $g\geq 0$, let
\begin{align}
 F( w ,g)=(g+1)^{-d/2}\exp\left(-\frac{ w }{2(g+1)}\right).
\end{align}
Then
\begin{align}
\int_{\mathbb{R}^d}\frac{F(\|x\|^2,g)}{\|x\|^2}\rd x=\frac{C_d}{g+1}, \ \text{ where } \ C_d=\frac{\pi^{d/2}2^{d/2}}{d-2}.
\end{align}
\end{enumerate}
\end{lem}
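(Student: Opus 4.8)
The plan is to reduce both statements to a single one-dimensional integral by exploiting radial symmetry, and then to recognize a Gamma function.

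For Part \ref{lem:gamma.1}, I would pass to polar coordinates. Since the integrand depends only on $\|x\|$, we have
\begin{align*}
 \int_{\mathbb{R}^d}\frac{1}{\|x\|^2}\exp\left(-\frac{\|x\|^2}{\alpha}\right)\rd x
 =\omega_{d-1}\int_0^\infty r^{d-3}\exp\left(-\frac{r^2}{\alpha}\right)\rd r,
\end{align*}
where $\omega_{d-1}=2\pi^{d/2}/\Gamma(d/2)$ is the surface measure of the unit sphere in $\mathbb{R}^d$. The hypothesis $d\ge 3$ is exactly what makes $r^{d-3}$ integrable at the origin. Substituting $u=r^2/\alpha$ converts the radial integral into $\tfrac12\,\alpha^{d/2-1}\int_0^\infty u^{d/2-2}e^{-u}\,\rd u=\tfrac12\,\alpha^{d/2-1}\Gamma(d/2-1)$. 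Multiplying by $\omega_{d-1}$ and simplifying via $\Gamma(d/2)=(d/2-1)\Gamma(d/2-1)$ yields the stated value $2\pi^{d/2}\alpha^{d/2-1}/(d-2)$.

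For Part \ref{lem:gamma.2}, I would simply apply Part \ref{lem:gamma.1} with $\alpha=2(g+1)$, after pulling the factor $(g+1)^{-d/2}$ outside the integral. This gives $(g+1)^{-d/2}\cdot 2\pi^{d/2}\bigl(2(g+1)\bigr)^{d/2-1}/(d-2)=2^{d/2}\pi^{d/2}/\{(d-2)(g+1)\}=C_d/(g+1)$, with $C_d=\pi^{d/2}2^{d/2}/(d-2)$ as in \eqref{C_d}.

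There is no real obstacle here; the computation is elementary. The only points to handle with a bit of care are the value of the sphere-surface constant $\omega_{d-1}$, the Gamma-function recursion used to rewrite $\Gamma(d/2-1)/\Gamma(d/2)$, and the observation that $d\ge 3$ is needed precisely for convergence near $x=0$ (for $d=2$ the integral in Part \ref{lem:gamma.1} diverges logarithmically).
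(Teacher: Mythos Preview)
Your proposal is correct and follows essentially the same route as the paper: reduce to a radial integral via spherical symmetry, recognize a Gamma integral, simplify using $\Gamma(d/2)=(d/2-1)\Gamma(d/2-1)$, and then obtain Part~\ref{lem:gamma.2} from Part~\ref{lem:gamma.1} with $\alpha=2(g+1)$. The only cosmetic difference is that the paper substitutes $t=\|x\|^2$ directly rather than passing through $r=\|x\|$ first.
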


\begin{proof}
For Part \ref{lem:gamma.1}, we have
\begin{equation}\label{identity}
 \begin{split}
 \int_{\mathbb{R}^d}\frac{1}{\|x\|^2}\exp\left(-\frac{\|x\|^2}{\alpha}\right)\rd x 
&=\frac{\pi^{d/2}}{\Gamma(d/2)}\int_0^\infty t^{d/2-1-1}\exp(-t/\alpha)\rd t \\
&=\frac{\pi^{d/2}}{\Gamma(d/2)}\Gamma(d/2-1)\alpha^{d/2-1}\\
&=\frac{2\pi^{d/2}\alpha^{d/2-1}}{d-2}.
\end{split}
\end{equation}
Part \ref{lem:gamma.2} follows from Part \ref{lem:gamma.1}.
\end{proof}

\end{document}